\newcommand\N{\mathbb{N}}
\newcommand\E{\mathbb{E}}
\newcommand\R{\mathbb{R}}
\newcommand\T{\mathbb{T}}
\newcommand\G{\mathbb{G}}
\newcommand{\veps}{\varepsilon}
\newcommand\calF{\mathcal{F}}
\newcommand\calG{\mathcal{G}}
\newcommand\calB{\mathcal{B}}
\newcommand\calO{\mathcal{O}}
\newcommand\calN{\mathcal{N}}
\newcommand\calV{\mathcal{V}}
\newcommand\calA{\mathcal{A}}
\newcommand\calW{\mathcal{W}}
\newcommand\calR{\mathcal{R}}
\newcommand\calT{\mathcal{T}}
\newcommand\calH{\mathcal{H}}
\newcommand\calK{\mathcal{K}}
\newcommand\wh{\widehat}
\newcommand\Var{\text{Var}}
\newcommand{\as}{\hspace{20pt} \text{a.s.}}
\renewcommand{\(}{\left(}
\renewcommand{\)}{\right)}
\def\build#1_#2^#3{\mathrel{\mathop{\kern 0pt#1}\limits_{#2}^{#3}}}
\def\liml{\build{\longrightarrow}_{}^{{\mbox{$\mathcal L$}}}}
\def\H#1{\textup{\textbf{(H.\ref{#1})}}}
\numberwithin{equation}{section}
\theoremstyle{plain}
\newtheorem{TD}{Theorem-Definition}[section]
\newtheorem{Prop}[TD]{Proposition}
\newtheorem{Theo}[TD]{Theorem}
\newtheorem{Lem}[TD]{Lemma}
\newtheorem{Rem}[TD]{Remark}
\begin{document}

\title[A Rademacher-Menchov approach for RCBAR processes]
{A Rademacher-Menchov approach for random coefficient bifurcating autoregressive processes}
\author{Bernard Bercu}
\author{Vassili Blandin}
\dedicatory{\normalsize Universit\'e Bordeaux 1}
\address{Universit\'e Bordeaux 1, Institut de Math\'ematiques de Bordeaux, UMR CNRS 5251, and INRIA Bordeaux, team ALEA,
	351 cours de la lib\'eration, 33405 Talence cedex, France.}

\email{bernard.bercu@math.u-bordeaux1.fr}
\email{vassili.blandin@math.u-bordeaux1.fr}
\keywords{bifurcating autoregressive process; random coefficient; least squares; martingale; almost sure convergence; central limit theorem}

\subjclass[2010]{Primary 60F15; Secondary 60F05, 60G42}

\begin{abstract}
We investigate the asymptotic behavior of the least squares estimator of the unknown parameters of random coefficient bifurcating autoregressive processes. Under suitable assumptions on inherited and environmental effects, we establish the almost sure convergence of our estimates. In addition, we also prove a quadratic strong law and central limit theorems. Our approach mainly relies on asymptotic results for vector-valued martingales together with the well-known Rademacher-Menchov theorem.
\end{abstract}

\maketitle


\section{Introduction}


The purpose of this paper is to study random coefficient bifurcating autoregressive processes (RCBAR). One can see those processes in two different ways. The first one is to see them as random coefficient autoregressive processes (RCAR) adapted to binary tree structured data, the second one is to consider those processes as the association of RCAR processes and bifurcating autoregressive processes (BAR). BAR processes have been first studied by Cowan and Staudte \cite{CowanStaudte} when RCAR processes have been first investigated by Nicholls and Quinn \cite{NichollsQuinn1,NichollsQuinn2}. The RCBAR structure allows us to reckon with environmental effects and inherited effects in order to better take into account the evolution of the characteristic under study. One shall see cell division as an example of binary tree structured data.\\

Let us detail what a RCBAR process is. The first individual is designated as the individual 1 and each individual $n$ leads to individuals $2n$ and $2n+1$. $X_n$ will stand for the characteristic under study of individual $n$. We can now make explicit the first-order RCBAR process which is given, for all $n\geq 1$, by
$$\begin{cases} X_{2n}=a_n X_n +\veps_{2n},\\ X_{2n+1} = b_n X_n +\veps_{2n+1}. \end{cases}$$
The driven noise sequence $(\veps_{2n},\veps_{2n+1})$ represents the environmental effect while the random coefficient sequence $(a_n,b_n)$ represents the inherited effect. Keeping in mind the example of cell division, we assume that $\veps_{2n}$ and $\veps_{2n+1}$ are correlated in order to take into account the environmental effect on two sister cells.\\

Our goal is to study the asymptotic behavior of the least squares estimators of the unknown parameters of first-order RCBAR processes. In contrast with the previous work of Blandin \cite{Vassili2} where the asymptotic behavior of weighted least squares estimators were investigated, we propose here to make use of a totally different strategy based on the standard least squares (LS) estimators together with the well-known Rademacher-Menchov theorem. The martingale approach for BAR processes has been first suggested by Bercu et al.~\cite{BercuBDSAGP}, followed by the recent contribution of de Saporta et al.~\cite{BDSAGPMarsalle,BDSAGPMarsalleRCBAR}. We also refer the reader to Blandin \cite{Vassili} for the study of bifurcating integer-valued autoregressive processes. Our approach relies on the Rademacher-Menchov theorem which allows us to study the LS estimates in a different way as in de Saporta et al.~\cite{BDSAGPMarsalleRCBAR}. In particular, we drastically reduce the moment assumptions on the random coefficient sequence $(a_n,b_n)$ and on the driven noise sequence $(\veps_{2n},\veps_{2n+1})$. We shall also make use of the strong law of large numbers for martingales \cite{Duflo} and also the central limit theorem for martingales \cite{Duflo,HallHeyde} in order to study the asymptotic behavior of our LS estimates. The martingale approach of this paper has also been used by Basawa and Zhou \cite{BasawaZhou,ZhouBasawa,ZhouBasawa2}.\\

Since several methods have been proposed for the study of BAR processes, we tried to take into consideration each of them. In this way, we took into account the classical BAR approach as used by Huggins and Basawa \cite{HugginsBasawa99,HugginsBasawa2000} and by Huggins and Staudte \cite{HugginsStaudte} who investigated the evolution of cell diameters and lifetimes. We were also inspired by the bifurcating Markov chain approach brought in by Guyon \cite{Guyon} and applied by Delmas and Marsalle \cite{DelmasMarsalle}. We also reckoned with the analogy with the Galton-Watson processes as in Delmas and Marsalle \cite{DelmasMarsalle} and Heyde and Seneta \cite{HeydeSeneta}. Even though we chose to use LS estimates, different methods have been investigated for  parameter estimation in RCAR processes. While Koul and Schick \cite{KoulSchick} used an M-estimator, Aue et al. \cite{AueHorvathSteinebach} tackled a quasi-maximum likelihood approach. Vanecek \cite{Vanecek} used an estimator first introduced by Schick \cite{Schick}. On their side, Hwag et al. \cite{HwangBasawaKim} studied the critical case where the environmental effect follows a Rademacher distribution.\\

The paper is organised as follows. We will explain more accurately the model we will consider in Section 2, leading to Section 3 where we will give explicitly our LS estimates of the unknown parameters under study. The martingale point of view chosen in this paper will be highlighted in Section 4. All our results about the asymptotic behavior of our LS estimates will be stated in Section 5, in particular the almost sure convergence, the quadratic strong law and the asymptotic normality. Section 6 is devoted to the Rademacher-Menchov theorem. All technical proofs are postponed to the last sections.


\section{Random coefficient bifurcating autoregressive processes}\label{section2}


We will study the first-order RCBAR process given, for all $n\geq1$, by
\begin{equation} \label{systori}
\begin{cases}
X_{2n} &= a_n X_n + \veps _{2n},\\
X_{2n+1} &= b_n X_n + \veps_{2n+1},
\end{cases}
\end{equation}
where $X_1$ is the ancestor of the process and $(\veps_{2n},\veps_{2n+1})$ is the driven noise of the process. We will suppose that $\E[X_1^{16}]<\infty$ and we will also assume that the two sequences $(a_n,b_n)$ and $(\veps_{2n},\veps_{2n+1})$ are independent and indentically distributed and are also mutually independent.
An RCBAR can be seen as a first-order autoregressive process on a binary tree, each node of this tree representing an individual and the first node being the ancestor. For all $n\geq1$, $\G_n$ will stand for the $n$-th generation, that is to say
$$\G_n=\{2^{n},2^{n}+1,\hdots,2^{n+1}-1\}.$$
We will also denote by $\T_n$ the set of all individuals up to the $n$-th generation, namely
$$\T_n = \bigcup_{n=0}^n \G_n.$$
One can immediately see that the cardinality $|\G_n|$ of $\G_n$ is $2^n$, while that of $\T_n$ is $2^{n+1}-1$. We will note $\G_{r_n}$ the generation of individual $n$, $r_n=\log_2(n)$. Let us recall that the two offspring of individual $n$ are individuals $2n$ and $2n+1$ and conversely, the direct ancestor of individual $n$ is individual $[n/2]$ where $[x]$ stands for the integer part of $x$.

\begin{figure}[h!]
\hspace{-2cm}
\includegraphics[scale=0.7]{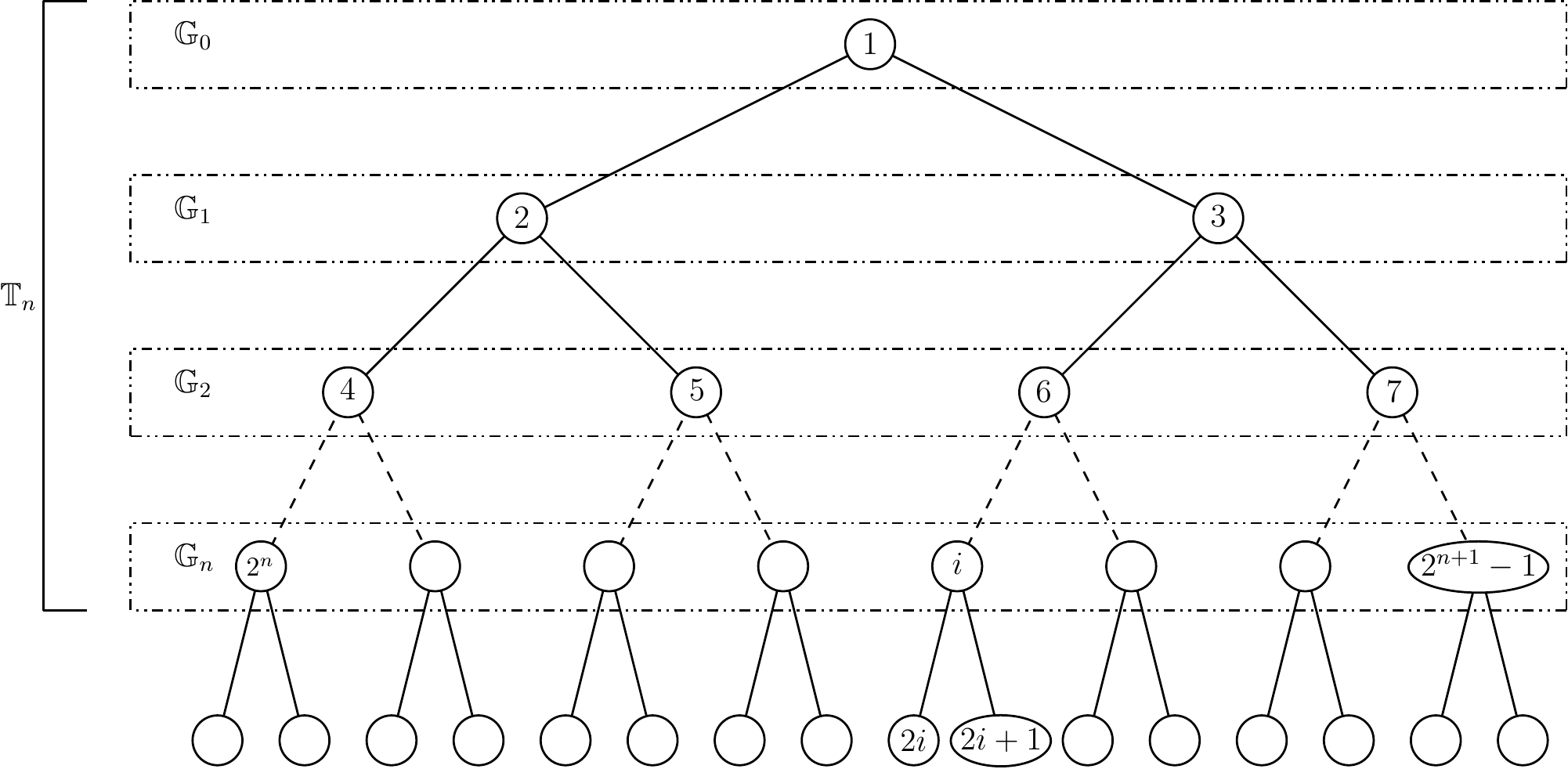}
\caption{The tree associated with the RCBAR}
\end{figure}


\section{Least squares estimators}\label{LSE}


Let $(\calF_n)$ be the natural filtration associated with the generations of our first-order RCBAR $(X_n)$, namely $\calF_n=\sigma\{X_k,k\in\T_n\}$ for all $n\in\N$. In all the sequel, we will assume that for all $n\geq0$ and for all $k\in\G_n$,
\begin{equation}\label{hypesp}
\begin{array}{c}
\E[a_k|\calF_n]=a, \hspace{20pt} \E[b_k|\calF_n]=b,\\
\E[\veps_{2k}|\calF_n]=c, \hspace{20pt} \E[\veps_{2k+1}|\calF_n]=d \as
\end{array}
\end{equation}
Consequently, \eqref{systori} can be rewritten as
\begin{equation} \label{systbis}
\begin{cases}
X_{2n} &= aX_n + c + V_{2n},\\
X_{2n+1} &= bX_n + d + V_{2n+1},
\end{cases}
\end{equation}
where, for all $k\in\G_n$, $V_{2k}=X_{2k}-E[X_{2k}|\calF_n]$ and $V_{2k+1}=X_{2k+1}-E[X_{2k}|\calF_n]$. We can rewrite the system \eqref{systbis} in a classic autoregressive form
\begin{equation} \label{formeBAR}
\chi_n = \theta^t \Phi_n + W_n
\end{equation}
where
$$\chi_n=\begin{pmatrix}X_{2n} \\ X_{2n+1}\end{pmatrix}, \hspace{20pt} \Phi_n = \begin{pmatrix}X_{n} \\ 1\end{pmatrix}, \hspace{20pt} \begin{pmatrix}V_{2n} \\ V_{2n+1} \end{pmatrix},$$
and the matrix parameter $\theta$ given by 
$$\theta = \begin{pmatrix}a & b \\ c & d \\ \end{pmatrix}.$$
One of our goal is to estimate $\theta$ from the observation of the $n$ first generation, namely $\T_n$. We will use the least squares estimator $\wh \theta_n$ of $\theta$ which minimizes
$$\Delta_n(\theta) = \sum_{k\in\T_{n-1}} \|\chi_k-\theta^t \Phi_k\|^2.$$
Hence, we obviously have
$$\wh\theta_n=S_{n-1}^{-1} \sum_{k\in\T_{n-1}} \Phi_k\chi_k^t,$$
where $S_n$ is the matrix given by
$$S_n = \sum_{k\in\T_n} \Phi_k\Phi_k^t.$$
In order to avoid any invertibility problem, we will assume that $S_1$ is invertible. Otherwise, we only have to add the identity matrix of order 2, $I_2$, to $S_n$. Moreover, we will make a slight abuse of notation by identifying $\theta$ and $\wh\theta_n$ to
$$\text{vec}(\theta) = \begin{pmatrix} a\\c\\b\\d \end{pmatrix} \hspace{20pt} \text{and} \hspace{20pt} \text{vec}(\wh\theta_n) = \begin{pmatrix} \wh a_n\\\wh c_n\\\wh b_n\\\wh d_n \end{pmatrix}.$$
In this vectorial form, we have
$$\wh\theta_n= \Sigma_{n-1}^{-1}\sum_{k\in\T_n} \begin{pmatrix} X_kX_{2k} \\ X_{2k} \\ X_kX_{2k+1} \\ X_{2k+1} \end{pmatrix},$$
where $\Sigma_n = I_2 \otimes S_n$ and $\otimes$ stands for the standard Kronecker product. Hence, \eqref{formeBAR} yields to
\begin{equation} \label{difftheta}
\wh\theta_n - \theta = \Sigma_{n-1}^{-1}\sum_{k\in\T_n} \begin{pmatrix} X_kV_{2k} \\ V_{2k} \\ X_kV_{2k+1} \\ V_{2k+1} \end{pmatrix}.
\end{equation}
In all this paper, we will make use of the following hypotheses on the moments of the random coefficient sequence $(a_n,b_n)$ and on the driven noise sequence $(\veps_{2n},\veps_{2n+1})$. One can observe that for all $n\geq0$ and for all $k\in\G_n$, the random coefficients $a_k$, $b_k$ and the driven noise $\veps_{2n}$, $\veps_{2n+1}$ are $\calF_{n+1}$-measurable.
\begin{enumerate}[\bf{({H}.}1)]
\item\label{H1} For all $n\geq1$,
\begin{align*}
\E[a_n^{16}] < 1 \hspace{20pt}  &\text{and} \hspace{20pt} \E[b_n^{16}] < 1,\\
\sup_{n\geq1}\E[\veps_{2n}^{16}]<\infty \hspace{20pt}  &\text{and} \hspace{20pt} \sup_{n\geq1}\E[\veps_{2n+1}^{16}]< \infty.
\end{align*}
 \item \label{H2} For all $n\geq0$ and for all $k\in \G_n$
$$\begin{array}{ccccccc}
  \Var[a_k|\calF_n] = \sigma_a^2 \geq 0 & \text{and} &  \Var[b_k|\calF_n] = \sigma_b^2 \geq 0 & & \text{a.s.}
\end{array}$$
$$\begin{array}{ccccccccccc}
  \Var[\veps_{2k}|\calF_n] = \sigma_c^2 > 0 & \text{and} &  \Var[\veps_{2k+1}|\calF_n] = \sigma_d^2 > 0 & & \text{a.s.}
\end{array}$$
 \item \label{H3} It exists $\rho_{ab}^2\leq\sigma_a^2\sigma_b^2$ and $\rho_{cd}^2<\sigma_c^2\sigma_d^2$ such that for all $n\geq0$ and for all $k\in\G_n$ 
$$\E[(a_k-a)(b_k-b)|\calF_n]=\rho_{ab} \as$$
$$\E[(\veps_{2k}-c)(\veps_{2k+1}-d)|\calF_n] = \rho_{cd} \hspace{20pt} \text{ a.s.}$$
Moreover, for all $n\geq0$ and for all $k,l\in\G_n$, with $k\neq l$, $(\veps_{2k},\veps_{2k+1})$ and  $(\veps_{2l},\veps_{2l+1})$ as well as $(a_k,b_k)$ and $(a_l,b_l)$ are conditionally independent given $\calF_n$.
\item \label{H4} One can find $\mu_a^4\geq\sigma_a^4$, $\mu_b^4\geq\sigma_b^4$, $\mu_c^4>\sigma_c^4$ and $\mu_d^4>\sigma_d^4$ such that, for all $n\geq0$ and for all $k\in \G_n$
$$\begin{array}{ccccccc}
  \E\left[\left(a_{k}-a\right)^4|\calF_n\right] = \mu_a^4 & & \text{and} & & \E\left[\left(b_k-b\right)^4|\calF_n\right] = \mu_b^4 & & \text{a.s.}
\end{array}$$
$$\begin{array}{ccccccc}
  \E\left[\left(\veps_{2k}-c\right)^4|\calF_n\right] = \mu_c^4 & & \text{and} & & \E\left[\left(\veps_{2k+1}-d\right)^4|\calF_n\right] = \mu_d^4 & & \text{a.s.}
\end{array}$$
$$\E[\veps_{2k}^4]>\E[\veps_{2k}^2]^2 \hspace{20pt} \text{ and } \hspace{20pt} \E[\veps_{2k+1}^4]>\E[\veps_{2k+1}^2]^2.$$
In addition, it exists $\nu_{ab}^2 \geq \rho_{ab}^2$ and $\nu_{cd}^2 > \rho_{cd}^2$ such that, for all $k\in\G_n$
$$\E[(a_k-a)^2(b_k-b)^2|\calF_n] = \nu_{ab}^2 \hspace{20pt} \text{and} \hspace{20pt} \E[(\veps_{2k}-c)^2(\veps_{2k+1}-d)^2|\calF_n] = \nu_{cd}^2 \hspace{20pt} \text{ a.s.}$$
\item \label {H5} It exists some $\alpha>4$ such that
$$\sup_{n\geq0}\sup_{k\in\G_n} \E[|a_k-a|^\alpha|\calF_n]<\infty, \hspace{20pt} \sup_{n\geq0}\sup_{k\in\G_n} \E[|b_k-b|^\alpha|\calF_n]<\infty \as$$
$$\sup_{n\geq0}\sup_{k\in\G_n} \E[|\veps_{2k}-c|^\alpha|\calF_n]<\infty, \hspace{20pt} \sup_{n\geq0}\sup_{k\in\G_n} \E[|\veps_{2k+1}-d|^\alpha|\calF_n]<\infty \as$$
\end{enumerate}
One can observe that hypothesis \H{H2} allows us to consider a classical BAR process where $a_k=a$ and $b_k=b$ a.s. Moreover, under assumptions \H{H2} and \H{H3}, we have for all $n\geq0$ and for all $k\in\G_n$
\begin{align}
\E[V_{2k}^2|\calF_n] &= \sigma_a^2 X_k^2 +\sigma_c^2 , \hspace{20pt} \E[V_{2k+1}^2|\calF_n] = \sigma_b^2 X_k^2 + \sigma_d^2 \as \label{sigma}\\
&\E[V_{2k}V_{2k+1}|\calF_n] = \rho_{ab} X_k^2 + \rho_{cd} \as \label{rho}
\end{align}
We deduce from \eqref{sigma} that, for all $n\geq1$, we can rewrite $V_{2n}^2$ as
$$V_{2n}^2 = \eta^t\psi_n+v_{2n},$$
where
$$\eta=\begin{pmatrix} \sigma_a^2 \\ \sigma_c^2 \end{pmatrix} \hspace{20pt} \text{and} \hspace{20pt} \psi_n = \begin{pmatrix} X_n^2 \\ 1 \end{pmatrix}.$$
It leads us to estimate the vector of variances $\eta$ by the least squares estimator
\begin{equation}\label{esteta}
\wh\eta_n = Q_{n-1}^{-1} \sum_{k\in\T_{n-1}}\wh V_{2k}^2\psi_k,
\end{equation}
where
$$Q_n = \sum_{k\in\T_n} \psi_k \psi_k^t$$
and for all $k\in\G_n$,
\begin{equation*}
\begin{cases}
\wh V_{2k} &= X_{2k} - \wh a_n X_k - \wh c_n,\\
\wh V_{2k+1} &= X_{2k+1} - \wh b_n X_k - \wh d_n.
\end{cases}
\end{equation*}
We clearly have a similar expression for the estimator of the vector of variances $\zeta = \begin{pmatrix}\sigma_b^2 & \sigma_d^2\end{pmatrix}^t$ by replacing $\wh V_{2k}$ by $\wh V_{2k+1}$ into \eqref{esteta}. By the same token, it follows from \eqref{rho} that, for all $n\geq1$, we can rewrite $V_{2n}V_{2n+1}$ as
$$V_{2n}V_{2n+1}  = \nu^t\psi_n + w_{2n}$$
where $\nu$ is the vector of covariances $\nu = \begin{pmatrix} \rho_{ab} & \rho_{cd} \end{pmatrix}^t$. Therefore, we can estimate $\nu$ by
\begin{equation} \label{estnu}
\wh \nu_n = Q_{n-1}^{-1} \sum_{k\in\T_{n-1}} \wh V_{2k} \wh V_{2k+1} \psi_k.
\end{equation}


\section{A martingale approach}


We already saw that relation \eqref{difftheta} can be rewritten as
\begin{equation}\label{diffthetaM}
\wh \theta_n - \theta = \Sigma_{n-1}^{-1} M_n,
\end{equation}
where
$$M_n = \sum_{k\in\T_n} \begin{pmatrix} X_kV_{2k} \\ V_{2k} \\ X_kV_{2k+1} \\ V_{2k+1}\end{pmatrix}.$$
The key point of this study is to remark that $(M_n)$ is a locally square integrable martingale, which allows us to make use of asymptotic results for martingales. This justifies our vectorial notation introduced previously since most of those asymptotic results have been established for vector-valued martingales. In order to study this martingale, let us rewrite $M_n$ in a more convenient way. Let $\Psi_n = I_2 \otimes \varphi_n$ where $\varphi_n$ is the $2\times2^n$ matrix given by
\begin{equation*}\varphi_n = \begin{pmatrix} X_{2^n} & X_{2^n+1} & \hdots & X_{2^{n+1}-1}  \\
	    1 & 1 & \hdots & 1
\end{pmatrix}.\end{equation*}
The first line of $\varphi_n$ gathers the individuals of the $n$-th generation, $\varphi_n$ can also be seen as the collection of all $\Phi_k$, $k\in\G_n$. Let $\xi_n$ be the random vector of dimension $2^n$ gathering the noise variables of $\G_n$, namely
$$\xi_n^t = \begin{pmatrix} V_{2^n} & V_{2^n+2} & \hdots & V_{2^{n+1}-2} & V_{2^n+1} & V_{2^n+3} & \hdots & V_{2^{n+1}-1}
	    \end{pmatrix}.
$$
The special ordering separating odd and even indices has been made in Bercu et al. \cite{BercuBDSAGP} in order to rewrite $M_n$ as
$$M_n = \sum_{k=1}^n \Psi_k\xi_k.$$
It clearly follows from \H{H1} to \H{H3} that $(M_n)$ is a locally square integrable martingale with increasing process given, for all $n\geq 1$, by

\begin{align}
\langle M\rangle_n &= \sum_{k=0}^{n-1} \Psi_k \E[\xi_{k+1}\xi_{k+1}^t|\calF_k]\Psi_k^t =\sum_{k=0}^{n-1} L_k \hspace{20pt} \text{a.s.} \label{defcrochetM}
\end{align}

\noindent where 
\begin{equation}\label{defLk}
    L_n = \sum_{k\in\G_{n}}\begin{pmatrix} P(X_k) & Q(X_k) \\ Q(X_k) & R(X_k) \end{pmatrix}\otimes\begin{pmatrix} X_k^2 & X_k \\ X_k & 1\end{pmatrix}
\end{equation}
with
\begin{equation}\label{defPQR}
\begin{cases} P(X) = \sigma_a^2X^2 + \sigma_c^2,\\
Q(X) = \rho_{ab} X^2 + \rho_{cd},\\
R(X) = \sigma_b^2 X^2 + \sigma_d^2.
\end{cases}
\end{equation}
The first step of our approach will be to establish the convergence of $\langle M\rangle_n$ properly normalized, from which we will be able to deduce several asymptotic results for our RCBAR estimates.


\section{Main results}


\begin{Lem} \label{CVsomme}
Assume that \H{H1} is statisfied. Then we have
\begin{equation}\label{Lemfonda}
\lim_{n\to\infty} \frac1{|\T_n|} \sum_{k\in\T_n} X_k^p = s_p \as
\end{equation}
where $s_p$ is a constant depending only on the moments of $a_1$, $b_1$, $\veps_{2}$ and $\veps_{3}$ up to the $p$-th order.
\end{Lem}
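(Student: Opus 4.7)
The plan is to induct on $p$. The case $p=0$ is trivial since $|\T_n|^{-1}|\T_n|=1$. Assuming the result for all $q<p$, write $T_n^{(p)}=\sum_{k\in\G_n}X_k^p$, so that $\sum_{k\in\T_n}X_k^p=\sum_{j=0}^{n}T_j^{(p)}$. Because generation sizes double and $|\G_n|/|\T_n|\to 1/2$, a Toeplitz-type Cesaro argument reduces the statement to the generation-wise almost sure convergence $T_n^{(p)}/|\G_n|\to\ell_p$, where $\ell_p$ is a constant to be identified; the announced $s_p$ will then simply be $\ell_p$.

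To identify $\ell_p$ and to exploit the i.i.d.\ and mutual independence structure of $(a_n,b_n)$ and $(\veps_{2n},\veps_{2n+1})$, I would expand $T_{n+1}^{(p)}=\sum_{k\in\G_n}[(a_kX_k+\veps_{2k})^p+(b_kX_k+\veps_{2k+1})^p]$ via the binomial formula and condition on $\calF_n$. This yields
\[
\E\bigl[T_{n+1}^{(p)}\mid\calF_n\bigr]=(\alpha_p+\beta_p)T_n^{(p)}+\sum_{j=0}^{p-1}\binom{p}{j}(\alpha_j\gamma_{p-j}+\beta_j\delta_{p-j})T_n^{(j)},
\]
with $\alpha_j=\E[a_1^j]$, $\beta_j=\E[b_1^j]$, $\gamma_j=\E[\veps_2^j]$, $\delta_j=\E[\veps_3^j]$. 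Jensen's inequality combined with \H{H1} gives $|\alpha_p|,|\beta_p|<1$, so $\alpha_p+\beta_p<2$, and the fixed-point relation $\ell_p=(2-\alpha_p-\beta_p)^{-1}\sum_{j<p}\binom{p}{j}(\alpha_j\gamma_{p-j}+\beta_j\delta_{p-j})\ell_j$ uniquely defines $\ell_p$ in terms of the lower-order inductive limits.

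To upgrade the heuristic to almost sure convergence, introduce the martingale increments $D_{n+1}:=T_{n+1}^{(p)}-\E[T_{n+1}^{(p)}\mid\calF_n]$. The conditional independence in \H{H3} makes the contributions indexed by $k\in\G_n$ conditionally uncorrelated, whence $\E[D_{n+1}^{2}\mid\calF_n]\leq C(T_n^{(2p)}+|\G_n|)$. A parallel induction (on $q=2p$) using the same expectation recursion and \H{H1} together with $\E[X_1^{16}]<\infty$ yields $\E[T_n^{(2p)}]=O(2^n)$, provided $2p\leq 16$; hence $(D_{n+1}/2^{n+1})$ is an orthogonal, $L^2$-summable sequence. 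The Rademacher-Menchov theorem then gives a.s.\ convergence of the series $\sum_{n}D_n/2^n$, and in particular $D_n/2^n\to 0$ a.s. Dividing the recursion by $2^{n+1}$ produces $u_{n+1}=\tfrac{1}{2}(\alpha_p+\beta_p)u_n+v_n+D_{n+1}/2^{n+1}$, where $u_n=T_n^{(p)}/2^n$ and $v_n$ converges a.s.\ by the induction hypothesis; the contraction $|\alpha_p+\beta_p|/2<1$ then forces $u_n\to\ell_p$ a.s.\ by the standard analysis of affine recurrences with contractive coefficient.

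The main obstacle is the $L^{2}$ bound $\E[D_{n+1}^{2}\mid\calF_n]=O(T_n^{(2p)})$ and its subsequent control $\E[T_n^{(2p)}]=O(2^n)$: pushing the nested expectation recursion through is exactly what consumes the sixteenth-moment budget in \H{H1} and implicitly fixes the maximal value of $p$ for which the lemma will be invoked.
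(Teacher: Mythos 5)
Your proof is correct, and it rests on the same two pillars as the paper's: the binomial moment recursion whose leading coefficient $(\E[a_1^p]+\E[b_1^p])/2$ is a contraction thanks to \H{H1} and Jensen, and a control of the fluctuation term through moments of order $2p$ (which is exactly what consumes the sixteenth-moment budget and caps $p$ at $8$). The packaging differs in a way worth noting. The paper keeps the fluctuation split individual by individual, introducing the orthogonal (but not filtration-ordered) sequence $Y_n=X_n(a_n-a)$, proving the uniform bound $\sup_n\E[X_n^2]<\infty$ by expanding $X_n$ along its ancestral line, and then genuinely needing the Rademacher--Menchov theorem plus Kronecker's lemma, before solving the affine recursion explicitly as a geometric convolution. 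You instead aggregate each generation into a single increment $D_{n+1}=T_{n+1}^{(p)}-\E[T_{n+1}^{(p)}\mid\calF_n]$, bound $\E[D_{n+1}^2\mid\calF_n]$ by $T_n^{(2p)}+|\G_n|$ using conditional uncorrelatedness within a generation, and close with an abstract contraction argument for the perturbed affine recurrence. Two small remarks: with your generation-wise grouping, $(D_n)$ is a bona fide martingale difference sequence, so plain $L^2$ martingale convergence already gives the a.s.\ convergence of $\sum_n D_n/2^n$ and Rademacher--Menchov is not really needed (whereas it is essential in the paper's individual-indexed version); and the conditional uncorrelatedness you attribute to \H{H3} in fact comes from the i.i.d.\ and mutual independence structure stated in Section 2, which is also what the paper implicitly uses, so the lemma's claim to require only \H{H1} survives in both arguments. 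Your fixed-point formula for $\ell_p$ agrees with the paper's recursive expression for $s_p$.
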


\begin{Rem}
In particular, we have
$$s_1 = \frac{c+d}{2-(a+b)},$$
$$s_2 = \frac{2}{2-(\sigma_a^2 +\sigma_b^2+a^2+b^2)}\left(\frac{(ac+bd)(c+d)}{2-(a+b)} + \frac{\sigma_c^2+\sigma_d^2+c^2+d^2}{2}\right),$$
and explicit expressions for $s_3$ to $s_{8}$ are given at the end of Section \ref{preuvepremlem}.
\end{Rem}

\begin{Prop} \label{cvcrochet}
 Assume that \H{H1} to \H{H3} are satisfied. Then, we have 
\begin{equation} \label{limcrochet}
 \lim_{n\to\infty} \frac{\langle M\rangle_n}{|\T_{n-1}|} = L \hspace{20pt} \text{ a.s.}
\end{equation}
\noindent where $L$ is the positive definite matrix given by
\begin{equation*} L = 
\begin{pmatrix} \sigma_c^2 & \rho_{cd} \\ \rho_{cd} & \sigma_d^2 \end{pmatrix} \otimes C + \begin{pmatrix} \sigma_a^2 & \rho_{ab} \\ \rho_{ab} & \sigma_b^2 \end{pmatrix} \otimes D,
\end{equation*}
where
\begin{equation}\label{defCD}
C = \begin{pmatrix} s_2 & s_1 \\ s_1 & 1 \end{pmatrix} \hspace{20pt} \text{ and } \hspace{20pt} D = \begin{pmatrix} s_4 & s_3 \\ s_3 & s_2 \end{pmatrix}.
\end{equation}

\end{Prop}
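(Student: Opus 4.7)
The plan is to first observe that $\langle M\rangle_n$ reduces to a single tree sum: summing \eqref{defLk} from $k=0$ to $n-1$ and using that the generations form a partition $\bigcup_{k=0}^{n-1}\G_k=\T_{n-1}$ gives
$$\langle M\rangle_n = \sum_{l\in\T_{n-1}} \begin{pmatrix} P(X_l) & Q(X_l) \\ Q(X_l) & R(X_l) \end{pmatrix}\otimes\begin{pmatrix} X_l^2 & X_l \\ X_l & 1 \end{pmatrix}.$$
Using \eqref{defPQR}, I would then split the first factor as $N+X_l^2 A$ with
$$N=\begin{pmatrix}\sigma_c^2 & \rho_{cd}\\ \rho_{cd} & \sigma_d^2\end{pmatrix}, \qquad A=\begin{pmatrix}\sigma_a^2 & \rho_{ab}\\ \rho_{ab} & \sigma_b^2\end{pmatrix},$$
and pull $N$ and $A$ out of the Kronecker product by bilinearity.

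After this decomposition, $|\T_{n-1}|^{-1}\langle M\rangle_n$ becomes $N$ tensored with an empirical average of $\Phi_l\Phi_l^t$ plus $A$ tensored with the same average reweighted by $X_l^2$. Each entry in these averages is a tree mean of the form $|\T_{n-1}|^{-1}\sum_{l\in\T_{n-1}}X_l^p$ for some $p\in\{0,1,2,3,4\}$, and Lemma \ref{CVsomme} provides the almost sure convergence of all of them to $s_p$. Plugging in yields the two limits $C$ and $D$ of \eqref{defCD}, so $|\T_{n-1}|^{-1}\langle M\rangle_n\to N\otimes C+A\otimes D=L$ a.s.

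It remains to check the positive definiteness of $L$. Assumption \H{H3} gives $\det N=\sigma_c^2\sigma_d^2-\rho_{cd}^2>0$, so $N$ is positive definite, while $A$ is at least positive semi-definite by the same assumption. A Cauchy--Schwarz argument on the empirical measure on $\T_{n-1}$, combined with the strict positivity of the driving variances $\sigma_c^2,\sigma_d^2$ from \H{H2} (which prevents the process from being degenerate in the limit), yields $\det C=s_2-s_1^2>0$ so $C$ is positive definite, and $D$ is positive semi-definite. Since the Kronecker product preserves these properties, $N\otimes C$ is positive definite and $A\otimes D$ positive semi-definite, and $L$ is therefore positive definite. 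Analytically there is no real obstacle here; the heavy lifting was carried out in Lemma \ref{CVsomme}, and once the tree moments up to order four are under control, Proposition \ref{cvcrochet} amounts to a rearrangement plus a few positivity checks.
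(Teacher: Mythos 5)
Your route is essentially the paper's: the convergence \eqref{limcrochet} is obtained exactly as you describe, by summing \eqref{defLk} over generations via \eqref{defcrochetM}, splitting the first Kronecker factor as $N+X_l^2A$, and feeding the resulting tree means $|\T_{n-1}|^{-1}\sum_{l\in\T_{n-1}}X_l^p$, $0\le p\le 4$, into Lemma \ref{CVsomme}; your handling of $N$, $A$, $D$ and the Kronecker-product positivity step also matches the paper. The one genuine soft spot is the strict positivity of $\det C=s_2-s_1^2$. Cauchy--Schwarz on the empirical measures only gives $s_2-s_1^2\ge 0$, and your remark that $\sigma_c^2,\sigma_d^2>0$ ``prevents the process from being degenerate in the limit'' is the right intuition but is asserted, not proved: you would still have to exclude the equality case, i.e.\ asymptotic concentration of the empirical distribution of the $X_k$'s, which is not immediate from \H{H2} alone. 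The paper avoids this by brute force: substituting the closed forms of $s_1$ and $s_2$ it rewrites $s_2-s_1^2$ as a sum of three terms, two of which are nonnegative squares, and bounds
\begin{equation*}
s_2-s_1^2\;\ge\;\frac{\sigma_c^2+\sigma_d^2}{2-(\sigma_a^2+\sigma_b^2+a^2+b^2)}\;>\;0
\end{equation*}
using $\sigma_a^2+\sigma_b^2+a^2+b^2<2$ from \H{H1} and $\sigma_c^2+\sigma_d^2>0$ from \H{H2}. Either supply this explicit computation or make your non-degeneracy claim rigorous; everything else in your proposal is sound.
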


\begin{Rem}
One can observe that we only need to assume for convergence \eqref{limcrochet} that 
$$\E[a_n^{8}]<1, \hspace{20pt} \E[b_n^{8}]<1, \hspace{20pt} \sup_{n\geq1}\E[\veps_{2n}^{8}]<\infty, \hspace{20pt} \sup_{n\geq1}\E[\veps_{2n+1}^{8}]<\infty.$$
\end{Rem}

\noindent Our first result deals with the almost sure convergence of the LS estimator $\wh\theta_n$.

\begin{Theo} \label{CVpstheta}
 Assume that \H{H1} to \H{H3} are satisfied. Then, $\wh\theta_n$ converges almost surely 
to $\theta$ with the almost sure rate of convergence
\begin{equation*}
\|\wh\theta_n-\theta\|^2 =  \calO\left(\frac{n}{|\T_{n-1}|}\right) \hspace{20pt} \text{ a.s.} \label{rate}
\end{equation*}
\noindent In addition, we also have the quadratic strong law
\begin{equation}\label{quadratic1}
 \lim_{n\to\infty} \frac1n \sum_{k=1}^n |\T_{k-1}|(\wh \theta_k -\theta)^t \Lambda (\wh \theta_k -\theta) = tr(\Lambda^{-1/2}L\Lambda^{-1/2}) \hspace{20pt} \text{ a.s.}\\
\end{equation}\noindent where
\begin{equation*}
\Lambda = I_2\otimes (C+D).
\end{equation*}

\end{Theo}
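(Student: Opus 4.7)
The plan is to transport almost-sure behaviour from the vector-valued martingale $(M_n)$ up to the estimator through the representation \eqref{diffthetaM}, $\wh\theta_n-\theta=\Sigma_{n-1}^{-1}M_n$, and then apply a quadratic strong law for martingales.

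I would first control the denominator. Lemma \ref{CVsomme} yields $S_n/|\T_n|\to C$ a.s., and since $s_2-s_1^2>0$ the matrix $I_2\otimes C$ is positive definite, so $\Sigma_{n-1}$ is eventually invertible with $\|\Sigma_{n-1}^{-1}\|=\calO(|\T_{n-1}|^{-1})$ a.s. For the numerator, Proposition \ref{cvcrochet} already gives the sharp order $\langle M\rangle_n=\calO(|\T_{n-1}|)$ a.s. The classical strong law for vector-valued martingales (Duflo) would however only produce $\|M_n\|^2=o\bigl(|\T_{n-1}|(\log|\T_{n-1}|)^{1+\gamma}\bigr)=o(n^{1+\gamma}|\T_{n-1}|)$ for every $\gamma>0$, which falls short of the announced rate for $\|\wh\theta_n-\theta\|^2$. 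The Rademacher-Menchov theorem of Section 6 is precisely what removes the spurious $(\log)^{\gamma}$ factor: applied to the generation-wise orthogonal increments $\Psi_k\xi_k$ together with the second-moment information supplied by Proposition \ref{cvcrochet}, it upgrades the bound to $\|M_n\|^2=\calO(n|\T_{n-1}|)$ a.s. Combined with the bound on $\Sigma_{n-1}^{-1}$ this delivers $\|\wh\theta_n-\theta\|^2\leq\|\Sigma_{n-1}^{-1}\|^2\|M_n\|^2=\calO(n/|\T_{n-1}|)$ a.s.

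For the quadratic strong law \eqref{quadratic1}, substituting $\wh\theta_k-\theta=\Sigma_{k-1}^{-1}M_k$ recasts the sum as $\frac{1}{n}\sum_{k=1}^n|\T_{k-1}|M_k^t\Sigma_{k-1}^{-1}\Lambda\Sigma_{k-1}^{-1}M_k$. Using the almost-sure limits from the first step and Proposition \ref{cvcrochet}, the predictable coefficient appearing between $M_k$ and $M_k$ converges to a deterministic positive matrix, and I would then invoke the vector-valued quadratic strong law of Chaabane--Bercu--Touati / Duflo (Thm.~3.IV.6). Because $|\T_{k-1}|\asymp 2^k$, the logarithm of $\det\langle M\rangle_k$ grows linearly in $k$, which is what makes the natural QSL-normalisation collapse to $1/n$; the limit then takes the trace form $\operatorname{tr}(\Lambda^{-1/2}L\Lambda^{-1/2})$ after the usual diagonalisation that symmetrises $L$ against $\Lambda$.

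The main obstacle is clearly the second step: obtaining the \emph{sharp} $\calO(n|\T_{n-1}|)$ growth of $\|M_n\|^2$ under the minimal moment assumptions \H{H1}-\H{H3} via the Rademacher-Menchov theorem, rather than the softer $o(n^{1+\gamma}|\T_{n-1}|)$ that comes for free from Duflo's strong law. Everything else is a controlled assembly of Lemma \ref{CVsomme}, Proposition \ref{cvcrochet} and standard vector-valued martingale limit theorems; checking the Lindeberg-type condition for the QSL on the generation-wise increments $\Psi_k\xi_k$ is routine but not entirely automatic, and uses hypothesis \H{H1} to absorb the eighth-moment contributions arising from $L_k$.
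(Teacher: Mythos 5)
Your overall architecture (transport the behaviour of $M_n$ to $\wh\theta_n$ through $\wh\theta_n-\theta=\Sigma_{n-1}^{-1}M_n$, control $\Sigma_{n-1}^{-1}$ by Lemma \ref{CVsomme}) is sound, but the step you yourself identify as the main obstacle is where the argument breaks down. The Rademacher--Menchov theorem does not upgrade the martingale bound to the sharp order $\|M_n\|^2=\calO(n|\T_{n-1}|)$. Applied to the generation-wise orthogonal increments $\Psi_k\xi_k$ with weights $a_k$, it requires $\sum_k a_k^2\,\E[\|\Psi_k\xi_k\|^2](\log k)^2<\infty$; since $\E[\|\Psi_k\xi_k\|^2]\asymp|\G_k|$, the choice $a_k=(k|\G_k|)^{-1/2}$ that would yield the target rate produces the divergent series $\sum_k(\log k)^2/k$, and any convergent choice only recovers the soft bound $o(n^{1+\gamma}|\T_{n-1}|)$ you dismissed. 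In the paper, Rademacher--Menchov plays an entirely different role: it is the engine of the keystone Lemma \ref{CVsomme} on the empirical moments $|\T_n|^{-1}\sum_{k\in\T_n}X_k^p$ (replacing higher-moment assumptions on $(a_n,b_n)$ and the noise), not a device for sharpening the rate of $M_n$. The sharp rate comes instead from the Riccati-type energy identity $\calV_{n+1}+\calA_n=\calV_1+\calB_{n+1}+\calW_{n+1}$ with $\calV_n=M_n^tP_{n-1}^{-1}M_n$ and $P_n=\sum_{k\in\T_n}(1+X_k^2)I_2\otimes\Phi_k\Phi_k^t$: one shows $\calW_n=\calO(n)$, bounds $L_n\leq\alpha\,\Delta P_n$ so that $\langle\calB\rangle_n\leq4\alpha\calA_n$ and hence $\calB_n=o(\calA_n)$, and this bootstraps $\calA_n=\calO(n)$ and $\calV_n=\calO(n)$, giving the announced rate after dividing by $\lambda_{\min}(\Sigma_{n-1}P_{n-1}^{-1}\Sigma_{n-1})\asymp|\T_{n-1}|$. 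Without this (or an equivalent) mechanism your first claim is unsupported.

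A second, smaller gap concerns the quadratic strong law. To extract the limit of $\calA_n/n$ from the decomposition one needs $\calV_n=o(n)$, strictly better than the $\calO(n)$ obtained above; the paper gets this from Lemma \ref{delta}, $\|M_n\|^2=o(|\T_n|n^\delta)$ for every $\delta>1/2$, proved via Wei's lemma after verifying $\sup_n\E[f_{n+1}^4|\calF_n]<\infty$ for the normalized generation sums. Your plan to invoke a black-box vector QSL normalised by $\log\det\langle M\rangle_k$ is not obviously wrong, but you would still have to verify its moment or Lindeberg-type hypotheses for increments that are sums over whole generations of size $2^k$, and you give no indication of how; the paper's explicit route through $\calA_n=\sum_kM_k^tP_{k-1}^{-1/2}A_kP_{k-1}^{-1/2}M_k$ with $A_k\to\frac12I_4$ is what actually produces the constant $\operatorname{tr}(\Lambda^{-1/2}L\Lambda^{-1/2})$.
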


\noindent Our second result concerns the almost sure asymptotic properties of our least squares variance and covariance estimators $\wh \eta_n$, $\wh \zeta_n$ and $\wh \nu_n$. We need to introduce some new variables
\begin{align*}
\eta_n  &= Q_{n-1}^{-1} \sum_{k\in \T_{n-1}}  V_{2k}^2 \psi_k,\\
\zeta_n &= Q_{n-1}^{-1} \sum_{k\in \T_{n-1}}  V_{2k+1}^2 \psi_k,\\
\nu_n &= Q_{n-1}^{-1}\sum_{k\in\T_{n-1}} V_{2k}V_{2k+1}\psi_k.
\end{align*}

\begin{Theo}\label{CVpsvar}
 Assume that \H{H1} to \H{H3} are satisfied. Then, $\wh \eta_n$ and $ \wh \zeta_n$ both converge almost surely to $\eta$ and 
$\zeta$ respectively. More precisely,
\begin{align}
\|\wh \eta_{n} - \eta_{n}\|  &= \calO\left(\frac{n}{|\T_{n-1}|}\right) \hspace{20pt} \text{ a.s.}\label{vitesseeta} \\
\|\wh \zeta_{n} - \zeta_{n}\| &= \calO\left(\frac{n}{|\T_{n-1}|}\right) \hspace{20pt} \text{ a.s.}\label{vitesseetad} 
\end{align}

\noindent In addition, $\wh \nu_n$ converges almost surely to $\nu$ with
\begin{equation}
\|\wh \nu_n - \nu_n\| = \calO\left(\frac n{|\T_{n-1}|}\right) \hspace{20pt} \text{ a.s.} \label{vitesserho}
\end{equation}

\end{Theo}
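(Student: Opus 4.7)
The plan is to decompose, for each of the three estimators,
$$\wh\eta_n - \eta = (\wh\eta_n - \eta_n) + (\eta_n - \eta), \qquad \wh\zeta_n - \zeta = (\wh\zeta_n - \zeta_n) + (\zeta_n - \zeta), \qquad \wh\nu_n - \nu = (\wh\nu_n - \nu_n) + (\nu_n - \nu),$$
and show that each summand is $\calO(n/|\T_{n-1}|)$ almost surely. The first summand in each pair is an algebraic perturbation coming from the substitution of the LS estimators of $\theta$ into the residuals, while the second is a genuine martingale term handled by the Rademacher–Menchov theorem advertised in the title.

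For the algebraic piece $\wh\eta_n - \eta_n$, I would write $\wh V_{2k} = V_{2k} - (\wh a_n - a)X_k - (\wh c_n - c)$ and expand
$$\wh V_{2k}^2 - V_{2k}^2 = -2V_{2k}\bigl((\wh a_n-a)X_k + (\wh c_n-c)\bigr) + \bigl((\wh a_n-a)X_k + (\wh c_n-c)\bigr)^2.$$
After multiplying by $\psi_k=(X_k^2,1)^t$ and summing over $k\in\T_{n-1}$, three kinds of terms appear: empirical moments $|\T_{n-1}|^{-1}\sum_{k\in\T_{n-1}} X_k^p$ for $p\le 4$, controlled by Lemma \ref{CVsomme}; quadratic factors in $\wh a_n-a, \wh c_n-c$, controlled by Theorem \ref{CVpstheta} which already provides $\|\wh\theta_n - \theta\|^2=\calO(n/|\T_{n-1}|)$; and linear cross-sums $\sum_{k\in\T_{n-1}} X_k^p V_{2k}$ with $p\le 3$, which are martingale transforms whose size $\calO(\sqrt{|\T_{n-1}|\,\log |\T_{n-1}|})=\calO(\sqrt{n|\T_{n-1}|})$ follows from the strong law of large numbers for locally square integrable martingales. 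Combining these with the almost sure positive definiteness of $Q_{n-1}/|\T_{n-1}|$ (itself a consequence of Lemma \ref{CVsomme}) yields \eqref{vitesseeta}.

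For the second piece, I use the decomposition $V_{2k}^2=\eta^t\psi_k+v_{2k}$ with $v_{2k}=V_{2k}^2-\E[V_{2k}^2|\calF_{r_k}]$ a martingale increment, which gives
$$\eta_n - \eta = Q_{n-1}^{-1}\sum_{k\in\T_{n-1}} v_{2k}\psi_k.$$
The right-hand sum is a vector-valued martingale adapted to $(\calF_n)$, and this is exactly the point where Rademacher–Menchov is brought in: rather than invoking the classical strong law, one computes the predictable bracket (which involves $\E[v_{2k}^2|\calF_{r_k}]$, controlled via the $16$th moment bound in \H{H1} applied to $V_{2k}^4$) and verifies the orthogonal-series condition needed to deduce the sharp almost sure rate $\sum v_{2k}\psi_k=\calO(n)$ after normalisation by $|\T_{n-1}|$. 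Dividing by $Q_{n-1}\sim|\T_{n-1}|\,C$ then provides the matching $\calO(n/|\T_{n-1}|)$ bound for $\|\eta_n-\eta\|$.

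The treatment of $\wh\zeta_n - \zeta$ is formally identical after swapping $(\wh a_n,\wh c_n, V_{2k})$ for $(\wh b_n,\wh d_n, V_{2k+1})$. For $\wh\nu_n - \nu_n$ one expands instead
$$\wh V_{2k}\wh V_{2k+1} - V_{2k}V_{2k+1} = -V_{2k+1}\bigl((\wh a_n-a)X_k+(\wh c_n-c)\bigr) - V_{2k}\bigl((\wh b_n-b)X_k+(\wh d_n-d)\bigr) + R_k,$$
with $R_k$ a purely quadratic remainder in $\wh\theta_n-\theta$, and reruns the same three-type analysis using both families of martingale cross-sums $\sum X_k^p V_{2k}$ and $\sum X_k^p V_{2k+1}$; the corresponding martingale step $\nu_n-\nu\to 0$ uses the increment $w_{2k}=V_{2k}V_{2k+1}-\nu^t\psi_k$ in exactly the same way as $v_{2k}$. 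The main obstacle I anticipate is precisely this Rademacher–Menchov step applied to $\sum v_{2k}\psi_k$, $\sum v_{2k+1}\psi_k$ and $\sum w_{2k}\psi_k$, because getting the sharp rate $\calO(n/|\T_{n-1}|)$ (rather than mere almost sure convergence to $0$) forces one to track the predictable bracket of these tree-indexed martingales carefully and to match their logarithmic factors against $\log|\T_{n-1}|\sim n\log 2$; all other ingredients are consequences of Lemma \ref{CVsomme} and Theorem \ref{CVpstheta}.
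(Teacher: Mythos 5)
Your overall architecture --- splitting $\wh\eta_n-\eta$ into the residual-substitution error $\wh\eta_n-\eta_n$ and the martingale term $\eta_n-\eta$, and controlling empirical moments by Lemma \ref{CVsomme} --- matches the paper's, but there are two concrete problems. First, your expansion $\wh V_{2k}=V_{2k}-(\wh a_n-a)X_k-(\wh c_n-c)$ for every $k\in\T_{n-1}$ does not match the definition of the residuals: for $k\in\G_l$ the paper sets $\wh V_{2k}=X_{2k}-\wh a_l X_k-\wh c_l$, with the estimator of generation $l$, not of generation $n$. With that definition the quadratic term becomes $\sum_{l=0}^{n-1}\bigl((\wh a_l-a)^2+(\wh c_l-c)^2\bigr)\sum_{k\in\G_l}(1+X_k^2)^2$, and the pointwise rate $\|\wh\theta_l-\theta\|^2=\calO(l/|\T_{l-1}|)$ that you invoke only yields $\sum_{l\leq n}\calO(l)=\calO(n^2)$, which is not enough. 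The paper needs the quadratic strong law \eqref{quadratic1} precisely here: it gives $\sum_{l=0}^{n-1}|\T_{l-1}|\,\|\wh\theta_l-\theta\|^2=\calO(n)$, whence the required $\calO(n)$ bound. The same issue recurs in the cross term, which under the correct definition is a genuine martingale $P_n$ (because $\wh\theta_l$ is $\calF_l$-measurable and $\E[V_{2k}|\calF_l]=0$ for $k\in\G_l$) whose increasing process is again controlled via \eqref{quadratic1}; your factorisation of $(\wh a_n-a)$ out of the whole sum is simply not available.

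Second, your claim that $\sum_{k\in\T_{n-1}}v_{2k}\psi_k=\calO(n)$, hence $\|\eta_n-\eta\|=\calO(n/|\T_{n-1}|)$, cannot be correct: this is a martingale whose predictable bracket grows like $|\T_{n-1}|$, and Theorem \ref{TCL} shows that $\sqrt{|\T_{n-1}|}(\wh\eta_n-\eta)$ is asymptotically normal with nondegenerate covariance under \H{H4}, so $\|\eta_n-\eta\|$ is of exact order $|\T_{n-1}|^{-1/2}$, much larger than $n/|\T_{n-1}|$. Fortunately the theorem only asks for almost sure convergence of $\eta_n$ to $\eta$ (the rate \eqref{vitesseeta} concerns $\wh\eta_n-\eta_n$ only), and the plain strong law of large numbers for vector martingales, with $\|\langle N\rangle_n\|=\calO(|\T_{n-1}|)$, gives $\|N_n\|^2=\calO(n|\T_{n-1}|)$ and hence $\|\eta_n-\eta\|^2=\calO(n/|\T_{n-1}|)$, which suffices. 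Note also that the Rademacher--Menchov theorem plays no role in this part of the paper; it is used only inside the proof of the keystone Lemma \ref{CVsomme}, and invoking it here neither helps nor delivers the rate you announce.
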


\begin{Rem}\label{remrate}
We also have the almost sure rates of convergence
$$\|\wh \eta_{n} - \eta\|^2 =\calO\left(\frac{n}{|\T_{n-1}|}\right),~~ \|\wh \zeta_{n} - \zeta\|^2 =\calO\left(\frac{n}{|\T_{n-1}|}\right),~~ \|\wh \nu_{n} - \nu\|^2 =\calO\left(\frac{n}{|\T_{n-1}|}\right) ~~~ a.s.$$
\end{Rem}

\noindent Finally, our last result is devoted to the asymptotic normality of our least squares estimates $\wh\theta_n$, $\wh \eta_n$, $\wh \zeta_n$ and $\wh \nu_n$.

\begin{Theo}\label{TCL}
 Assume that \H{H1} to \H{H5} are satisfied. Then, we have the asymptotic normality
\begin{equation} \label{TCLtheta}
 \sqrt{|\T_{n-1}|}(\wh\theta_n - \theta) \liml \calN(0,\Gamma^{-1}L\Gamma^{-1}).
\end{equation}

\noindent In addition, we also have
\begin{eqnarray}
\sqrt{|\T_{n-1}|}\left(\wh \eta_{n} - \eta\right)  \liml \calN(0,A^{-1} M_{ac} A^{-1}) \label{TCLeta},\\
\sqrt{|\T_{n-1}|}\left(\wh \zeta_{n} - \zeta\right) \liml \calN(0,A^{-1} M_{bd} A^{-1}) \label{TCLetad},
\end{eqnarray}
\noindent where
\begin{equation*} \label{defD} \Gamma = I_2\otimes C, \hspace{30pt} A = \begin{pmatrix} s_4 & s_2 \\ s_2 & 1 \end{pmatrix},\end{equation*}
$$M_{ac} = (\mu_a^4-\sigma_a^4)\begin{pmatrix} s_8 & s_6 \\ s_6 & s_4 \end{pmatrix}+4\sigma_a^2\sigma_c^2\begin{pmatrix} s_6 & s_4 \\ s_4 & s_2 \end{pmatrix}+(\mu_c^4-\sigma_c^4) \begin{pmatrix} s_4 & s_2 \\ s_2 & 1 \end{pmatrix},$$
$$M_{bd} =(\mu_b^4-\sigma_b^4)\begin{pmatrix} s_8 & s_6 \\ s_6 & s_4 \end{pmatrix}+4\sigma_b^2\sigma_d^2\begin{pmatrix} s_6 & s_4 \\ s_4 & s_2 \end{pmatrix}+(\mu_d^4-\sigma_d^4) \begin{pmatrix} s_4 & s_2 \\ s_2 & 1 \end{pmatrix}.$$
Finally,
\begin{equation} 
\sqrt{|\T_{n-1}|}  \left(\wh \nu_n - \nu\right) \liml \calN\left(0,A^{-1} H A^{-1} \right) \label{TCLrho}
\end{equation}

\noindent where 
$$H = (\nu_{ab}^2-\rho_{ab}^2)\begin{pmatrix} s_8 & s_6 \\ s_6 & s_4 \end{pmatrix} + (\sigma_a^2 \sigma_d^2 + \sigma_b^2 \sigma_c^2+2\rho_{ab}\rho_{cd})\begin{pmatrix} s_6 & s_4 \\ s_4 & s_2 \end{pmatrix} + (\nu_{cd}^2 - \rho_{cd}^2) \begin{pmatrix} s_4 & s_2 \\ s_2 & 1 \end{pmatrix}.$$

\end{Theo}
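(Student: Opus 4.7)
The strategy is to handle all four convergences uniformly through the central limit theorem for square-integrable vector-valued martingales (see e.g.~\cite{Duflo} or \cite{HallHeyde}), and then to translate the martingale CLT into the stated limits via Slutsky's lemma, using the a.s.~convergence of the normalising matrices $\Sigma_{n-1}/|\T_{n-1}|\to\Gamma$ and $Q_{n-1}/|\T_{n-1}|\to A$, both immediate consequences of Lemma \ref{CVsomme}.

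For \eqref{TCLtheta}, I would rewrite \eqref{diffthetaM} as
$$\sqrt{|\T_{n-1}|}\,(\wh\theta_n-\theta)=\bigl(|\T_{n-1}|\Sigma_{n-1}^{-1}\bigr)\cdot\frac{M_n}{\sqrt{|\T_{n-1}|}}.$$
Proposition \ref{cvcrochet} already gives $\langle M\rangle_n/|\T_{n-1}|\to L$ a.s., so it only remains to verify a Lindeberg (or Lyapunov) condition on the increments $\Psi_k\xi_k$ of $M_n$. This is exactly the role of hypothesis \H{H5}: combining the conditional moment of order $\alpha>4$ for $a_k-a$, $b_k-b$, $\veps_{2k}-c$, $\veps_{2k+1}-d$ with the bound $\sup_k\E[X_k^{16}]<\infty$ (which follows from \H{H1} by a standard generation-by-generation induction), and exploiting $|\G_k|=2^k$ together with $|\T_{n-1}|=2^n-1$, I would establish the Lyapunov-type bound
$$\frac{1}{|\T_{n-1}|^{\alpha/2}}\sum_{k=0}^{n-1}\E\bigl[\|\Psi_k\xi_k\|^{\alpha}\bigm|\calF_k\bigr]\longrightarrow 0\qquad\text{a.s.}$$
The martingale CLT then yields $M_n/\sqrt{|\T_{n-1}|}\liml\calN(0,L)$, and Slutsky gives \eqref{TCLtheta}.

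For the variance and covariance estimators I would first invoke Theorem \ref{CVpsvar} and Remark \ref{remrate} to reduce to a CLT for $\eta_n-\eta$, $\zeta_n-\zeta$ and $\nu_n-\nu$: since $\|\wh\eta_n-\eta_n\|=\calO(n/|\T_{n-1}|)$ a.s., one has $\sqrt{|\T_{n-1}|}\,(\wh\eta_n-\eta_n)\to 0$ a.s., and likewise for $\zeta$ and $\nu$. For $\eta_n$ the identity $Q_{n-1}(\eta_n-\eta)=\sum_{k\in\T_{n-1}}v_{2k}\psi_k=:N_n^{(ac)}$, the $\calF_{r_k}$-measurability of $\psi_k$ and the conditional independence built into \H{H3}, show that $(N_n^{(ac)})$ is a square-integrable $(\calF_n)$-martingale. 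Expanding $V_{2k}=(a_k-a)X_k+(\veps_{2k}-c)$ and using \H{H4}, all mixed terms vanish by independence and zero-mean properties, giving for $k\in\G_j$
$$\E\bigl[v_{2k}^2\bigm|\calF_j\bigr] = (\mu_a^4-\sigma_a^4)X_k^4 + 4\sigma_a^2\sigma_c^2 X_k^2 + (\mu_c^4-\sigma_c^4),$$
so that Lemma \ref{CVsomme} applied with $p=2,4,6,8$ delivers $\langle N^{(ac)}\rangle_n/|\T_{n-1}|\to M_{ac}$. Combined with $Q_{n-1}/|\T_{n-1}|\to A$ and a Lindeberg condition obtained in the same way as above, the CLT produces \eqref{TCLeta}; \eqref{TCLetad} is strictly symmetric.

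The proof of \eqref{TCLrho} follows the same template with $w_{2k}=V_{2k}V_{2k+1}-\rho_{ab}X_k^2-\rho_{cd}$ and the martingale $N_n^{(cd)}=\sum_{k\in\T_{n-1}}w_{2k}\psi_k$. Expanding $V_{2k}V_{2k+1}$ and using the mutual independence of $(a_k,b_k)$ and $(\veps_{2k},\veps_{2k+1})$ together with the zero-mean property of each factor, a careful but elementary calculation yields
$$\E\bigl[w_{2k}^2\bigm|\calF_j\bigr] = (\nu_{ab}^2-\rho_{ab}^2)X_k^4 + (\sigma_a^2\sigma_d^2+\sigma_b^2\sigma_c^2+2\rho_{ab}\rho_{cd})X_k^2 + (\nu_{cd}^2-\rho_{cd}^2),$$
which, after normalisation by $|\T_{n-1}|$ via Lemma \ref{CVsomme}, produces precisely the matrix $H$. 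The main technical obstacle, common to all four cases, is the verification of the Lindeberg (equivalently Lyapunov) condition on the corresponding martingale: this is the only place in the argument where the stronger moment hypothesis \H{H5} is genuinely needed, and it must be combined with the uniform boundedness of $\E[X_k^{p}]$ up to $p=16$ provided by \H{H1}. The bookkeeping in the conditional-variance computation for $w_{2k}$ is the secondary technical point, but is purely algebraic and relies only on the independence structure encoded in \H{H3}--\H{H4}.
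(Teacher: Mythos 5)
Your high-level skeleton is the right one and matches the paper in several places: the reduction of \eqref{TCLeta}--\eqref{TCLrho} to CLTs for $\eta_n$, $\zeta_n$, $\nu_n$ via Theorem \ref{CVpsvar} (indeed $\sqrt{|\T_{n-1}|}\,\|\wh\eta_n-\eta_n\|=\calO(n/\sqrt{|\T_{n-1}|})\to0$), the conditional-variance identities for $v_{2k}$ and $w_{2k}$, and the final Slutsky step with $\Sigma_{n-1}/|\T_{n-1}|\to\Gamma$ and $Q_{n-1}/|\T_{n-1}|\to A$. However, there is a genuine gap at the central technical step. The Lyapunov condition you propose,
$$\frac{1}{|\T_{n-1}|^{\alpha/2}}\sum_{k=0}^{n-1}\E\bigl[\|\Psi_k\xi_k\|^{\alpha}\bigm|\calF_k\bigr]\longrightarrow 0,$$
is false, and more generally no Lindeberg-type condition can hold for the generation-indexed martingale $M_n=\sum_k\Psi_k\xi_k$. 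The generation-$k$ increment is a sum over the $2^{k-1}$ sister pairs of generation $k$, so its conditional second moment is of order $2^{k}$ (its trace is bounded below by $|\G_{k-1}|(\sigma_c^2+\sigma_d^2)$); in particular the last increment alone carries an asymptotically positive fraction (one half, since $L_{n-1}/|\T_{n-1}|\to L/2$ while $\langle M\rangle_n/|\T_{n-1}|\to L$) of the total conditional variance. By Jensen, $\E[\|\Psi_k\xi_k\|^{\alpha}]\geq(\E[\|\Psi_k\xi_k\|^{2}])^{\alpha/2}\geq c\,2^{k\alpha/2}$, so your Lyapunov sum is bounded below by a positive constant and does not vanish. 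The asymptotic negligibility of individual increments, which is indispensable for any martingale CLT, fails at the generation scale; the same objection applies to your treatment of $N_n^{(ac)}$ and $N_n^{(cd)}$ as $(\calF_n)$-martingales.

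The missing idea — and the actual content of the paper's proof — is a re-indexing of the martingale as a triangular array over individuals rather than generations: one sets $M^{(n)}_k=|\T_n|^{-1/2}\sum_{l=1}^{k}D_l$ with $D_l=(X_lV_{2l},\,V_{2l},\,X_lV_{2l+1},\,V_{2l+1})^t$, adapted to the sister-pair-wise filtration $\calG_k=\sigma\{X_1,(X_{2j},X_{2j+1}),\,1\leq j\leq k\}$, so that there are $t_n=|\T_n|$ increments each of conditional second moment $\calO((1+X_l^2)^2)/|\T_n|$. Lyapunov's condition then reduces, via the bound $\E[V_{2l}^4\,|\,\calG_{l-1}]\leq\mu_{ac}^4(1+X_l^2)^2$ and Lemma \ref{CVsomme}, to $|\T_n|^{-2}\sum_{k\in\T_n}(1+X_k^2)^4\to0$, which does hold; note that for \eqref{TCLtheta} fourth conditional moments (i.e.\ \H{H4}) already suffice, whereas \H{H5} with exponent $\alpha/2>2$ is what is needed for the arrays built from $v_{2l}$ and $w_{2l}$, since these are themselves quadratic in the noise. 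Once this re-indexing is in place, the rest of your argument (convergence of the bracket to $L$, $M_{ac}$, $M_{bd}$, $H$, then Slutsky) goes through as you describe.
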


\noindent The rest of the paper is dedicated to the proof of our main results.


\section{On the Rademacher-Menchov theorem}

Our almost sure convergence results rely on the Rademacher-Menchov theorem for orthonormal sequences of random variables given by Rademacher \cite{Rademacher} and Menchoff \cite{Menchoff}, see Stout \cite{Stout} and also Tandori \cite{Tandori1,Tandori2} and an unpublished note of Talagrand \cite{Talagrand} for some extensions of this result.

\begin{Theo}\label{ThRM}
Let $(X_n)$ be an orthonormal sequence of square integrable random variables which means that for all $n\neq k$,
$$\E[X_nX_k]=0\hspace{20pt} \text{and} \hspace{20pt} \E[X_n^2]=1.$$
Assume that a sequence of real numbers $(a_n)$ satisfies
\begin{equation}\label{RMhyp}
\sum_{n=1}^\infty a_n^2 (\log n)^2 < \infty.
\end{equation}
Then, the series
\begin{equation}\label{RMres}
\sum_{k=1}^\infty a_k X_k
\end{equation}
converges almost surely.
\end{Theo}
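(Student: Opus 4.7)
The plan is to reduce the almost sure convergence of $S_n = \sum_{k=1}^n a_k X_k$ to a maximal inequality of Rademacher--Menchov type, and then to exploit a dyadic block decomposition of $\N$ together with Borel--Cantelli.

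First, I would establish the Rademacher--Menchov maximal inequality: for any finite orthonormal family $X_1,\ldots,X_N$ and real coefficients $a_1,\ldots,a_N$,
\begin{equation*}
\E\Bigl[\max_{1 \le k \le N} S_k^2\Bigr] \;\le\; C\,(1 + \log_2 N)^2 \sum_{k=1}^N a_k^2.
\end{equation*}
The proof proceeds by dyadic induction. For $N \le 2^J$, each index $k \le N$ admits a unique binary expansion, and $S_k$ decomposes into at most $J+1$ ``dyadic chunks'' of the form $T_{i,j} = \sum_{m = j 2^i + 1}^{(j+1) 2^i} a_m X_m$. Cauchy--Schwarz applied to this $(J+1)$-term decomposition yields $S_k^2 \le (J+1) \sum_{i,j} T_{i,j}^2$ where the inner sum runs over the chunks visited by $k$, and orthonormality gives $\E[\sum_j T_{i,j}^2] = \sum_m a_m^2$ at each scale $i$, producing the overall $(J+1)^2 \asymp (\log N)^2$ factor.

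Next, I would partition $\N$ into dyadic intervals $I_p = \{2^p,\ldots,2^{p+1} - 1\}$ of length $2^p$ and apply the maximal inequality to the orthonormal sub-family $(X_k)_{k \in I_p}$ with coefficients $(a_k)_{k \in I_p}$. Setting $M_p := \max_{k \in I_p} |S_k - S_{2^p - 1}|$, this gives $\E[M_p^2] \le C p^2 \sum_{k \in I_p} a_k^2$; since $p^2 \le C(\log k)^2$ uniformly for $k \in I_p$, hypothesis \eqref{RMhyp} implies
\begin{equation*}
\sum_{p \ge 1} \E[M_p^2] \;\le\; C \sum_{k \ge 2} a_k^2 (\log k)^2 \;<\; \infty,
\end{equation*}
so by monotone convergence $\sum_p M_p^2 < \infty$ almost surely and the intra-block oscillations $M_p$ tend to zero a.s. It remains to establish almost sure convergence along the dyadic subsequence $(S_{2^p - 1})_p$, which can be done by applying the same dyadic-block machinery to the orthogonal increments $U_p = S_{2^{p+1}-1} - S_{2^p -1}$: after normalisation, one obtains a new orthonormal family whose square coefficients $b_p^2 = \sum_{k\in I_p}a_k^2$ satisfy $\sum_p b_p^2(\log p)^2 < \infty$, an instance of the same problem at a coarser scale whose resolution is routine. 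Combining the two estimates shows that $(S_n)$ is a.s. Cauchy, hence the series \eqref{RMres} converges almost surely.

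The main obstacle is the maximal inequality itself. Since the $X_n$ are merely orthonormal --- not a martingale difference sequence, not independent --- Doob's inequality is unavailable, and the $(\log N)^2$ weight turns out to be intrinsic: it is forced by the Cauchy--Schwarz step applied to the $\Theta(\log N)$ dyadic chunks into which a generic partial sum must be resolved, and is known to be sharp for general orthonormal families. Once this inequality is in hand, the remaining steps amount to a routine Borel--Cantelli plus dyadic-slicing exercise, with every logarithmic loss absorbed precisely by the weighting $(\log n)^2$ present in \eqref{RMhyp}.
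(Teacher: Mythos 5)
The paper does not actually prove Theorem \ref{ThRM}: it is quoted as a classical result with pointers to Rademacher, Menchoff and Stout, so there is no internal proof to compare against, and your attempt has to be judged on its own. What you write is the standard textbook proof of the Menchov--Rademacher theorem, and its two main components are sound: the maximal inequality $\E[\max_{1\le k\le N}S_k^2]\le C(1+\log_2 N)^2\sum_{k\le N}a_k^2$, obtained by resolving $[1,k]$ into at most $J+1$ dyadic chunks, applying Cauchy--Schwarz to that decomposition and using orthogonality scale by scale; and the control of the intra-block oscillations $M_p$ through $\sum_p \E[M_p^2]\le C\sum_{k\ge 2}a_k^2(\log k)^2<\infty$, which forces $M_p\to 0$ a.s.

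The one step that does not stand as written is the convergence of the dyadic subsequence $(S_{2^p-1})$. You propose to view the normalised increments $U_p/b_p$ as ``an instance of the same problem at a coarser scale'' with coefficients $b_p$ satisfying $\sum_p b_p^2(\log p)^2<\infty$; taken literally this re-applies the theorem you are proving, which is circular, and iterating the reduction to ever coarser scales never terminates. The standard and very short repair: since $\log k\ge p\log 2$ for $k\in I_p$, hypothesis \eqref{RMhyp} yields the much stronger bound $\sum_p p^2 b_p^2<\infty$, and then Cauchy--Schwarz gives $\sum_p \E[|U_p|]\le \sum_p b_p\le \bigl(\sum_p p^2 b_p^2\bigr)^{1/2}\bigl(\sum_p p^{-2}\bigr)^{1/2}<\infty$, so $\sum_p |U_p|<\infty$ almost surely and the subsequence converges absolutely --- no recursion, and no further use of orthogonality, is needed. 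With that one substitution your argument is complete and is exactly the classical proof the paper's references have in mind.
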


\begin{Rem}
One can observe that $(X_n)$ is neither a sequence of independent random variables nor a sequence of uncorrelated random variables since $(X_n)$ is not necessarily centered. In addition, in the case where $(X_n)$ is an orthogonal sequence of random variables, we have the same result \eqref{RMres}, replacing \eqref{RMhyp} by
$$\sum_{n=1}^\infty a_n^2 \E[X_n^2](\log n)^2 < \infty.$$
\end{Rem}
\noindent Moreover, if $a_n=1/n$, it follows from \eqref{RMres} and Kronecker's lemma that
$$\lim_{n\to\infty} \frac1n \sum_{k=1}^n X_k = 0 \as$$



\section{Proof of the keystone Lemma \ref{CVsomme}}\label{preuvepremlem}


We shall introduce some suitable notations. Let $(\beta_n)$ be the sequence defined, for all $n\geq1$, by $\beta_{2n}=a_n$ and $\beta_{2n+1}=b_n$. Then \eqref{systori} can be rewritten as
\begin{equation*} 
\begin{cases}
X_{2n} &= \beta_{2n} X_n + \veps _{2n},\\
X_{2n+1} &= \beta_{2n+1} X_n + \veps_{2n+1}.
\end{cases}
\end{equation*}
Consequently, for all $n\geq2$
$$X_n = \beta_n X_{\left[\frac n2\right]} + \veps_n.$$
\noindent First of all, let us prove that
$$\lim_{n\to\infty} \frac1{|\T_n|} L_n = s_1 \hspace{20pt} \text{where} \hspace{20pt} L_n = \sum_{k\in\T_n} X_k.$$
One can observe that
\begin{align*}
L_n &= \sum_{k\in\T_n} X_k = X_1+ \sum_{k\in\T_n\backslash \T_0} \left(\beta_k X_{\left[\frac k2\right]} + \veps_k \right),\\
&= X_1 + \sum_{k\in\T_{n-1}}(a_kX_k+\veps_{2k}+b_kX_k+\veps_{2k+1}),\\
&= X_1 + (a+b)L_{n-1} + A_{n-1}+B_{n-1}+E_{n-1},
\end{align*}
where
$$A_n = \sum_{k\in\T_{n}} X_k(a_k-a), \hspace{20pt} B_n = \sum_{k\in\T_{n}} X_k(b_k-b), \hspace{20pt} E_n = \sum_{k\in\T_{n}} (\veps_{2k}+\veps_{2k+1}).$$
Hence, we obtain
\begin{align}
\frac{L_n}{2^{n+1}} &= \frac{X_1}{2^{n+1}} + \frac{a+b}2 \frac{L_{n-1}}{2^n} + \frac{A_{n-1}}{2^{n+1}} + \frac{B_{n-1}}{2^{n+1}} + \frac{E_{n-1}}{2^{n+1}}, \nonumber\\
&= \left(\frac{a+b}2\right)^n \frac{L_0}2 + \sum_{k=1}^n \left(\frac{a+b}2\right)^{n-k}\left(\frac{X_1}{2^{k+1}} + \frac{A_{k-1}}{2^{k+1}} + \frac{B_{k-1}}{2^{k+1}} + \frac{E_{k-1}}{2^{k+1}}\right). \label{sommeL}
\end{align}
Recalling that $|\T_n| = 2^{n+1}-1$, the standard strong law of large numbers immediately implies that
$$\lim_{n\to\infty} \frac{E_{n}}{2^{n+1}} = \E[\veps_2 + \veps_3] = c+d \as$$
Let us tackle the limit of $A_n$ using the Rademacher-Menchov theorem given in Theorem \ref{ThRM}. Let $Y_n$ and $R_n$ be defined as
$$Y_n = X_n(a_n-a) \hspace{20pt} \text{and} \hspace{20pt} R_n = \sum_{k=1}^n Y_k.$$
For all $n\geq0$ and for all $k\in\G_n$,
$$\E[a_k-a|\calF_n]=\E[a_k-a]=0.$$
Moreover, we clearly have for all $n\geq2$ and for all different $k,l\in\G_n$,
\begin{align*}
\E[Y_kY_l]&=\E\left[\E[X_kX_l(a_k-a)(a_l-a)|\calF_n]\right],\\
&=\E\left[X_kX_l\E[a_k-a|\calF_n]\E[a_l-a|\calF_n]\right],\\
&=0.
\end{align*}
It means that $(Y_n)$ is a sequence of orthogonal random variables. In addition we have, for all $n\geq0$ and for all $k\in\G_n$,
\begin{align*}
\E[Y_k^2] &= \E\left[\E[X_k^2(a_k-a)^2|\calF_n]\right], \\
&= \E\left[X_k^2\E[(a_k-a)^2|\calF_n]\right]=\sigma_a^2\E[X_k^2].
\end{align*}
In order to calculate $\E[X_n^2]$, let us remark that
$$X_n = \left(\prod_{k=0}^{r_n-1}\beta_{\left[\frac n{2^k}\right]}\right)X_1 + \sum_{k=0}^{r_n-1}\left(\prod_{i=0}^{k-1}\beta_{\left[\frac n{2^i}\right]}\right)\veps_{\left[\frac n{2^k}\right]}.$$
Consequently,
\begin{multline*}
\E[X_n^2]  = \E\left[\left(\prod_{k=0}^{r_n-1}\beta_{\left[\frac n{2^k}\right]}^2\right)X_1^2\right]+\E\left[\left(\sum_{k=0}^{r_n-1}\left(\prod_{i=0}^{k-1}\beta_{\left[\frac n{2^i}\right]}\right)\veps_{\left[\frac n{2^k}\right]}\right)^2\right]\\
+2\sum_{k=0}^{r_n-1}\E\left[\left(\prod_{l=0}^{r_n-1}\beta_{\left[\frac n{2^l}\right]}\right)X_1\left(\prod_{i=0}^{k-1}\beta_{\left[\frac n{2^i}\right]}\right)\veps_{\left[\frac n{2^k}\right]}\right].
\end{multline*}
First of all,
$$\E\left[\left(\prod_{k=0}^{r_n-1}\beta_{\left[\frac n{2^k}\right]}^2\right)X_1^2\right] = \E[X_1^2]\prod_{k=0}^{r_n-1} \E\left[\beta_{\left[\frac n{2^k}\right]}^2\right]\leq\E[X_1^2] \max(\E[a_1^2],\E[b_1^2])^{r_n}\leq \E[X_1^2].$$
Next, for the cross term
\begin{align*}
&\left|\sum_{k=0}^{r_n-1}\E\left[\left(\prod_{l=0}^{r_n-1} \beta_{\left[\frac n{2^l}\right]}\right)X_1\left(\prod_{i=0}^{k-1}\beta_{\left[\frac n{2^i}\right]}\right)\veps_{\left[\frac n{2^k}\right]}\right]\right|\\
&\hspace{60pt}= \left|\sum_{k=0}^{r_n-1}\E\left[\left(\prod_{i=0}^{k-1}\beta^2_{\left[\frac n{2^i}\right]}\right)\left(\prod_{l=k+1}^{r_n-1}\beta_{\left[\frac n{2^l}\right]}\right)X_1\beta_{\left[\frac n{2^k}\right]}\veps_{\left[\frac n{2^k}\right]}\right]\right|,\\
&\hspace{60pt}=\left|\E[X_1]\sum_{k=0}^{r_n-1}\left(\prod_{i=0}^{k-1}\E\left[\beta^2_{\left[\frac n{2^i}\right]}\right]\right)\left(\prod_{l=k+1}^{r_n-1}\E\left[\beta_{\left[\frac n{2^l}\right]}\right]\right)\E\left[\beta_{\left[\frac n{2^l}\right]}\veps_{\left[\frac n{2^k}\right]}\right]\right|,\\
&\hspace{60pt}\leq \E[|X_1|] \sum_{k=0}^{r_n-1} \max(\E[a_1^2],\E[b_1^2])^{k} \max(|a|,|b|)^{r_n-k-1} \max(|ac|,|bd|),\\
&\hspace{60pt}\leq \E[X_1] \max(|ac|,|bd|) \frac{\max(|a|,|b|)^{r_n} - \max(\E[a_1^2],\E[b_1^2])^{r_n}}{\max(|a|,|b|)-\max(\E[a_1^2],\E[b_1^2])},\\
&\hspace{60pt}\leq \E[X_1] \max(|ac|,|bd|) \frac{1}{\left|\max(|a|,|b|)-\max(\E[a_1^2],\E[b_1^2])\right|}.
\end{align*}
Finally, for the last term,
\begin{align*}
&\E\left[\left(\sum_{k=0}^{r_n-1}\left(\prod_{i=0}^{k-1}\beta_{\left[\frac n{2^i}\right]}\right)\veps_{\left[\frac n{2^k}\right]}\right)^2\right]\\
&\hspace{10pt}= 2\sum_{k=1}^{r_n-1} \sum_{l=0}^{k-1} \E\left[\prod_{i=0}^{l-1}\beta_{\left[\frac{n}{2^i}\right]}^2\prod_{j=l+1}^{k-1}\beta_{\left[\frac{n}{2^j}\right]}\beta_{\left[\frac{n}{2^l}\right]}\veps_{\left[\frac{n}{2^l}\right]}\veps_{\left[\frac{n}{2^k}\right]}\right]+\sum_{k=0}^{r_n-1}\E\left[\prod_{i=0}^{k-1}\beta_{\left[\frac{n}{2^i}\right]}^2\veps_{\left[\frac{n}{2^k}\right]}^2\right],\\
&\hspace{10pt}\leq 2\sum_{k=1}^{r_n-1} \sum_{l=0}^{k-1} \max(\E[a_1^2],\E[b_1^2])^l \max(|a|,|b|)^{k-l-1}\max(|ac|,|bd|) \max(|c|,|d|)^2 \\
&\hspace{200pt} + \sum_{k=0}^{r_n-1} \max(\E[a_1^2],\E[b_1^2])^k \max(\E[\veps_2^2],\E[\veps_3^2]),\\
&\hspace{10pt} \leq 2 \frac{\max(|ac|,|bd|)\max(|c|,|d|)}{\left|\max(|a|,|b|)-\max(\E[a_1^2],\E[b_1^2])\right|}\left(\frac{1}{1-\max(|a|,|b|)} + \frac{1}{1-\max(\E[a_1^2],\E[b_1^2])}\right)\\
&\hspace{290pt} + \frac{\max(\E[\veps_2^2],\E[\veps_3^2])}{1-\max(\E[a_1^2],\E[b_1^2])}.
\end{align*}
To sum up, we proved that it exists some positive constant $\mu$ such that, for all $n\geq0$, $\E[X_n^2]\leq \mu$, leading to 
$$\sum_{n=1}^\infty\frac1{n^2}\E[Y_n^2](\log n)^2\leq \sigma_a^2\mu\sum_{n=1}^\infty \frac{(\log n)^2}{n^2}<\infty.$$
Therefore, it follows from the Rademacher-Menchov theorem that the series
$$\sum_{k=1}^n\frac1k Y_k$$
converges a.s. Consequently, Kronecker's lemma implies that
$$\lim_{n\to\infty} \frac1n \sum_{k=1}^n Y_n = \lim_{n\to\infty} \frac1n R_n = 0 \as$$
In particular
$$\lim_{n\to\infty} \frac1{|\T_n|} R_{|\T_n|} = \lim_{n\to\infty} \frac1{|\T_n|} A_n = 0 \as$$
Hence, we find that
$$\lim_{n\to\infty} \frac1{2^{n+1}} A_n = 0 \as$$
By the same token, we also have
$$\lim_{n\to\infty} \frac1{2^{n+1}} B_n = 0 \as$$
To sum up, we obtain that
\begin{equation}\label{lim1}
\lim_{n\to\infty} \frac{X_1}{2^{n+1}} + \frac{A_{n-1}}{2^{n+1}} + \frac{B_{n-1}}{2^{n+1}} + \frac{E_{n-1}}{2^{n+1}} = \frac{c+d}2 \as
\end{equation}
Therefore, we deduce from \eqref{sommeL} and \eqref{lim1} together with the assumption that $|a|<1$ and $|b|<1$, that
\begin{equation}\label{lim2}
\lim_{n\to\infty} \frac{L_n}{2^{n+1}} = \frac{c+d}2\frac1{1-\displaystyle\frac{a+b}2} \as
\end{equation}
leading to
$$\lim_{n\to\infty} \frac1{|\T_n|} \sum_{k\in\T_n} X_k = \frac{c+d}{2-(a+b)} \as$$
Let us now tackle the study of
$$K_n = \sum_{k\in\T_n} X_k^2.$$
First, one can observe that
\begin{align*}
K_n &= \sum_{k\in\T_n} X_k^2 = X_1^2+ \sum_{k\in\T_n\backslash \T_0} \left(\beta_k X_{\left[\frac k2\right]} + \veps_k\right)^2,\\
&= X_1^2+ \left(\sum_{k\in\T_n\backslash \T_0}\beta_k^2 X_{\left[\frac k2\right]}^2\right) + 2\left(\sum_{k\in\T_n\backslash \T_0}\beta_k\veps_kX_{\left[\frac k2\right]}\right) + \left(\sum_{k\in\T_n\backslash \T_0}\veps_k^2\right),\\
&= X_1^2 + (\sigma_a^2 +\sigma_b^2 + a^2 +b^2) K_{n-1} + 2(ac+bd)L_{n-1} + A_{n-1} + B_{n-1} + E_{n-1},
\end{align*}
where
$$A_n = \sum_{k\in\T_n} X_k^2(a_k^2 + b_k^2 -(\sigma_a^2+\sigma_b^2+a^2+b^2)),$$
$$B_n = \sum_{k\in\T_n} X_k(a_k\veps_{2k} + b_k\veps_{2k+1} -(ac+bd)),$$
$$E_n = \sum_{k\in\T_n} (\veps_{2k}^2 + \veps_{2k+1}^2).$$
Hence we obtain, as for $L_n$
\begin{equation*}\label{sommeK}
\frac{K_n}{2^{n+1}} = \mu^n \frac{K_0}2 + \sum_{k=1}^n \mu^{n-k}\left(\frac{X_1^2}{2^{k+1}} + \nu \frac{L_{k-1}}{2^{k}} + \frac{A_{k-1}}{2^{k+1}} + \frac{B_{k-1}}{2^{k+1}} + \frac{E_{k-1}}{2^{k+1}}\right),
\end{equation*}
where, since $\E[a_k^2] = \sigma_a^2 +a^2 <1$ and $\E[b_k^2] = \sigma_b^2 +b^2 <1$,
$$\mu = \frac{\sigma_a^2 + \sigma_b^2+a^2 +b^2}{2} < 1 \hspace{20pt} \text{and} \hspace{20pt} \nu = ac+bd.$$
As previously, the strong law of large numbers leads to 
\begin{equation}\label{lim3}
\lim_{n\to\infty} \frac1{|\T_n|} E_n = \sigma_c^2 + \sigma_d^2 + c^2 +d^2 \as
\end{equation}
Moreover, it follows once again from the Rademacher-Menchov theorem with Kronecker's lemma, \eqref{lim2} and \eqref{lim3} that
\begin{equation*}\lim_{n\to_\infty} \frac{K_n}{2^{n+1}} \\= \frac1{1-\mu}\left(\nu\frac{c+d}{2-(a+b)} + \frac{\sigma_c^2 + \sigma_d^2 + c^2 +d^2}2\right) \as\end{equation*}
leading to convergence \eqref{Lemfonda} for $p=2$. We shall not carry out the proof of \eqref{Lemfonda} for $3\leq p\leq8$ inasmuch as it follows essentially the same lines that those for $p=2$. One can observe that, in order to prove \eqref{Lemfonda} for $3\leq p \leq 8$, it is necessary to assume that $\E[a_1^{2p}]<1$, $\E[b_1^{2p}]<1$, $\E[\veps_{2}^{2p}]<\infty$ and $\E[\veps_{3}^{2p}]<\infty$. The limiting values $s_3$ to $s_{8}$ may be explicitly calculated. More precisely, for all  $p\in\{1,2,\hdots,8\}$, denote
$$A_p = \E[a_1^p], \hspace{20pt} B_p = \E[b_1^p], \hspace{20pt} C_p = \E[\veps_2^p], \hspace{20pt}  D_p = \E[\veps_{3}^p].$$
We already saw that
$$s_1 = \frac{C_1+D_1}{2-(A_1+B_1)},$$
$$s_2 = \frac{2}{2-(A_2+B_2)}\left((A_1C_1+B_1D_1)s_1+\frac{C_2+D_2}2\right).$$
The other limiting values $s_3$ to $s_{8}$ of convergence \eqref{Lemfonda} can be recursively calculated via the linear relation
$$s_p = \frac{2}{2-(A_p+B_p)}\left(\sum_{k=1}^{p-1} \frac12 \binom pk (A_k C_{p-k} + B_k D_{p-k})s_k + \frac{C_p+D_p}2 \right).$$


\section{Proof of Proposition \ref{cvcrochet}}


The almost sure convergence \eqref{limcrochet} is immediate  through \eqref{defcrochetM}, \eqref{defLk} and Lemma \ref{CVsomme}. Let us now prove that $L$ is a positive definite matrix. First, the matrices
$$\begin{pmatrix} \sigma_a^2 & \rho_{ab} \\ \rho_{ab} & \sigma_b^2 \end{pmatrix} \hspace{20pt} \text{and} \hspace{20pt} \begin{pmatrix} \sigma_c^2 & \rho_{cd} \\ \rho_{cd} & \sigma_d^2 \end{pmatrix}$$
are clearly positive semidefinite and positive definite under \H{H3}. Moreover, $D$ is clearly positive semidefinite since
$$\lim_{n\to\infty}\frac1{|\T_n|} \sum_{k\in\T_n} \begin{pmatrix} X_k^4 & X_k^3 \\ X_k^3 & X_k^2 \end{pmatrix}= D \as$$
Finally, let us prove that $C$ is positive definite. Its trace is clearly greater than 1, hence we only have to prove that its determinant is positive. Its determinant is given by
\begin{align*}
s_2 - s_1^2 &= \frac2{2-\left(\sigma_a^2 + \sigma_b^2+a^2 +b^2\right)}\left(\frac{(ac+bd)(c+d)}{2-(a+b)} + \frac{\sigma_c^2 + \sigma_d^2 + c^2 +d^2}2\right) \\
	&\hspace{280pt}- \left(\frac{c+d}{2-(a+b)}\right)^2,\\
&=\frac{\sigma_c^2+\sigma_d^2}{2-\left(\sigma_a^2 + \sigma_b^2+a^2 +b^2\right)} + \left(\frac{c+d}{2-(a+b)}\right)^2\frac{\sigma_a^2+\sigma_b^2}{2-\left(\sigma_a^2 + \sigma_b^2+a^2 +b^2\right)}\\
	&\hspace{149pt} +\frac2{2-\left(\sigma_a^2 + \sigma_b^2+a^2 +b^2\right)} \frac{(ad-bc+c-d)^2}{(2-(a+b))^2}.
\end{align*}
The first term of this sum is positive since under \H{H1} $\sigma_a^2 + \sigma_b^2+a^2 +b^2<2$ and since under \H{H2} $\sigma_c^2+\sigma_d^2>0$. Moreover, the two other terms are clearly nonnegative, which proves that this matrix is positive definite. Since the Kronecker product of two positive semidefinite (respectively positive definite) matrices is a positive semidefinite (respectively positive definite) matrix, we can conclude that $L$ is positive definite.


\section{Proofs of the almost sure convergence results}\label{preuveCVpstheta}


We shall make use of a martingale approach, as the one developed bu Bercu et al.~\cite{BercuBDSAGP} or de Saporta et al.~\cite{BDSAGPMarsalleRCBAR}. For all $n\geq1$, let 
$$\calV_n = M_n^t P_{n-1}^{-1} M_n = (\wh\theta_n -\theta) \Sigma_{n-1} P_{n-1}^{-1} \Sigma_{n-1} (\wh \theta_n - \theta)$$
where 
$$P_n = \sum_{k\in\T_n} (1+X_k^2)I_2 \otimes \begin{pmatrix}X_k^2 & X_k \\ X_k & 1 \end{pmatrix}.$$
By the same calculations as in \cite{BercuBDSAGP}, we can easily see that
\begin{equation}\label{egaliteVn}
\calV_{n+1} + \calA_n = \calV_1 + \calB_{n+1} + \calW_{n+1},
\end{equation}
where
$$\calA_{n} = \sum_{k=1}^n M_k^t(P_{k-1}^{-1} - P_k^{-1})M_k,$$
$$B_{n+1} = 2 \sum_{k=1}^n M_k^t P_k^{-1} \Delta M_{k+1} \hspace{20pt} \text{and} \hspace{20pt} \calW_{n+1} = \sum_{k=1}^n \Delta M_{k+1}^t P_k^{-1} \Delta M_{k+1}.$$

\begin{Lem} \label{lemCVVnAn}
 Assume that \H{H1} to \H{H3} are satisfied. Then, we have
\begin{equation} \label{CVWn}
 \lim_{n\to\infty} \frac{\calW_{n}}{n} = \frac12 tr((I_2 \otimes (C+D))^{-1/2} L (I_2 \otimes (C+D))^{-1/2}) \hspace{20pt} \text{ a.s.}
\end{equation}
where $C$ and $D$ are the matrices given by \eqref{defCD}. In addition, we also have
\begin{equation}\label{CVBn}
\calB_{n+1} = o(n) \hspace{20pt} \text{ a.s.}
\end{equation}
and
\begin{equation} \label{CVVnAn}
 \lim_{n\to\infty} \frac{\calV_{n+1} + \calA_n}{n} = \frac12 tr((I_2 \otimes (C+D))^{-1/2} L (I_2 \otimes (C+D))^{-1/2}) \hspace{20pt} \text{ a.s.}
\end{equation}
\end{Lem}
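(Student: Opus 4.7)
The plan is to prove \eqref{CVWn} and \eqref{CVBn} separately, and then deduce \eqref{CVVnAn} from them via the identity \eqref{egaliteVn}. As preliminary input, Lemma \ref{CVsomme} applied to $X_k^p$ for $p\leq 4$ gives $P_n/|\T_n|\to I_2\otimes (C+D)$ a.s., and the same lemma applied entrywise to the definition \eqref{defLk} of $L_k$ gives $L_k/|\G_k|\to L$ a.s., hence $L_k/|\T_k|\to L/2$ since $|\T_k|/|\G_k|\to 2$.

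For \eqref{CVWn}, since $\E[\Delta M_{k+1}\Delta M_{k+1}^t\mid \calF_k]$ is the bracket increment $L_k$, one gets
$$\E\bigl[\Delta M_{k+1}^t P_k^{-1} \Delta M_{k+1}\bigm|\calF_k\bigr] = \mathrm{tr}\bigl(P_k^{-1}L_k\bigr),$$
so the predictable compensator of $\calW_{n+1}$ is $\tilde{\calW}_{n+1}=\sum_{k=1}^n \mathrm{tr}(P_k^{-1}L_k)$. The preliminary limits yield $\mathrm{tr}(P_k^{-1}L_k)\to \frac{1}{2}\mathrm{tr}((I_2\otimes(C+D))^{-1}L)$ a.s., and a Cesaro average gives the desired limit of $\tilde{\calW}_{n+1}/n$ (the symmetric trace in the statement follows by cyclicity). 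The martingale remainder $\calW_{n+1}-\tilde{\calW}_{n+1}$ is handled by the strong law of large numbers for martingales applied to the scalar martingale with increments $\Delta M_{k+1}^t P_k^{-1} \Delta M_{k+1} - \mathrm{tr}(P_k^{-1}L_k)$, using fourth-order moments of $\xi_{k+1}$---which involve $X_k^8$---and are kept bounded in mean by \H{H1} and Lemma \ref{CVsomme}.

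For \eqref{CVBn}, $(\calB_n)$ is a martingale with predictable quadratic variation
$$\langle\calB\rangle_n = 4\sum_{k=1}^n M_k^t P_k^{-1} L_k P_k^{-1} M_k.$$
The crucial point is that $L_k$ and $P_k-P_{k-1}$ share the same Kronecker structure modulo bounded scalar factors, yielding a deterministic $\gamma>0$ with $L_k\preceq \gamma(P_k-P_{k-1})$ in the semidefinite order. Combined with the standard matrix inequality $P_k^{-1}(P_k-P_{k-1})P_k^{-1}\preceq P_{k-1}^{-1}-P_k^{-1}$, this yields $\langle\calB\rangle_n\leq 4\gamma\,\calA_n+O(1)$, so the strong law for martingales produces $\calB_{n+1}=o(\calA_n)+O(1)$ a.s. Using $\calV_{n+1}\geq 0$, the identity \eqref{egaliteVn} gives $\calA_n\leq \calV_1+\calB_{n+1}+\calW_{n+1}$, and combined with $\calW_{n+1}=O(n)$ from the previous step a short bootstrap forces $\calA_n=O(n)$ and hence $\calB_{n+1}=o(n)$ a.s.

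Finally, \eqref{CVVnAn} is obtained by dividing \eqref{egaliteVn} by $n$ and applying \eqref{CVWn} and \eqref{CVBn}. The main obstacle will be the semidefinite comparison $L_k\preceq \gamma(P_k-P_{k-1})$ and closing the bootstrap loop between $\calA_n$, $\calB_{n+1}$ and $\calW_{n+1}$; the remainder consists of routine Cesaro and martingale-SLLN arguments.
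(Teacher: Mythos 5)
Your proposal is correct and follows essentially the same route as the paper: the same predictable-compensation plus Cesaro argument for $\calW_n$ (the paper merely swaps the order of the two reductions, first replacing $|\T_k|P_k^{-1}$ by its limit $(I_2\otimes(C+D))^{-1}$ and then compensating $\Delta M_{k+1}\Delta M_{k+1}^t$ by $L_k$), the same comparison $L_k\preceq\gamma\,\Delta P_k$ combined with $P_k^{-1}\Delta P_k P_k^{-1}\preceq P_{k-1}^{-1}-P_k^{-1}$ to get $\langle\calB\rangle_n\leq 4\gamma\calA_n$ and the bootstrap through \eqref{egaliteVn}, and the same final division of \eqref{egaliteVn} by $n$. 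The semidefinite comparison you flag as the main obstacle is exactly what the paper verifies by checking that the trace and determinant of $\alpha(1+X^2)I_2-\bigl(\begin{smallmatrix}P(X)&Q(X)\\ Q(X)&R(X)\end{smallmatrix}\bigr)$ are positive for $\alpha=\max(\sigma_a^2,\sigma_c^2)+\max(\sigma_b^2,\sigma_d^2)+\max(|\rho_{ab}|,|\rho_{cd}|)$.
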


\begin{proof}
\noindent First of all, we have $\calW_{n+1} = \calT_{n+1} + \calR_{n+1}$ where
$$\calT_{n+1} = \sum_{k=1}^n \frac{\Delta M_{k+1}^t (I_2 \otimes (C+D))^{-1} \Delta M_{k+1}}{|\T_k|},$$
$$\calR_{n+1} = \sum_{k=1}^n \frac{\Delta M_{k+1}^t (|\T_k| P_k^{-1} - (I_2 \otimes (C+D))^{-1}) \Delta M_{k+1}}{|\T_k|}.$$

\noindent One can observe that $\calT_{n+1} = tr((I_2 \otimes (C+D))^{-1/2} \calH_{n+1} (I_2 \otimes (C+D))^{-1/2})$ where
$$ \calH_{n+1} = \sum_{k=1}^n \frac{\Delta M_{k+1} \Delta M_{k+1}^t}{|\T_k|}.$$

\noindent Our aim is to make use of the strong law of large numbers for martingale transforms, so we start by adding and subtracting a term involving 
the conditional expectation of $\Delta \calH _{n+1}$ given $\calF_n$. We have thanks to relation \eqref{defcrochetM} that for all 
$n\geq0$, $\E[\Delta M_{n+1} \Delta M_{n+1}^t | \calF_n] = L_n$. Consequently, we can split $\calH_{n+1}$ into two terms
$$\calH_{n+1} = \sum_{k=1}^n \frac{L_k}{|\T_k|} + \calK_{n+1},$$
where 
$$\calK_{n+1} = \sum_{k=1}^n \frac{\Delta M_{k+1} \Delta M_{k+1}^t - L_k}{|\T_k|}.$$
It clearly follows from convergence \eqref{limcrochet} that 
$$ \lim_{n\to\infty} \frac{L_n}{|\T_n|} = \frac12 L \hspace{20pt} \text{ a.s.}$$
Hence, Cesaro convergence theorem immediately implies that
\begin{equation} \label{CesaroLk}
 \lim_{n\to\infty} \frac1n \sum_{k=1}^n \frac{L_k}{|\T_k|} = \frac12 L \hspace{20pt} \text{ a.s.}
\end{equation}
On the other hand, the sequence $(\calK_n)_{n\geq2}$ is obviously a square integrable martingale. Moreover, we have 
$$\Delta \calK_{n+1}= \calK_{n+1} - \calK_n = \frac1{|\T_n|} (\Delta M_{n+1} \Delta M_{n+1}^t - L_n).$$
For all $u\in\R^4$, denote $\calK_n(u) = u^t\calK_n u$. It follows from tedious but straightforward calculations, together with Lemma 
\ref{CVsomme}, that the increasing process of the martingale $(\calK_n(u))_{n\geq2}$ satisfies $\langle \calK(u)\rangle_n =\calO(n)$ a.s. Therefore, we deduce from 
the strong law of large numbers for martingales that for all $u\in\R^4$, $\calK_n(u) = o(n)$ a.s.~leading to $\calK_n = o(n)$ a.s. Hence, 
we infer from \eqref{CesaroLk} that
\begin{equation} \label{CVHn}
 \lim_{n\to\infty} \frac{\calH_{n+1}}n = \frac12 L \hspace{20pt} \text{a.s.}
\end{equation}

\noindent Via the same arguments as in the proof of convergence \eqref{limcrochet}, we find that
\begin{equation}\label{CVPn}
\lim_{n\to\infty} \frac{P_n}{|\T_n|} = I_2 \otimes (C+D) \hspace{20pt} \text{a.s.}
\end{equation}
Then, we obtain from \eqref{CVHn} that
$$ \lim_{n\to\infty} \frac{\calT_n}n = \frac12 tr((I_2 \otimes (C+D))^{-1/2} L (I_2 \otimes (C+D))^{-1/2}) \hspace{20pt} \text{a.s.}$$
which allows us to say that $\calR_n = o(n)$ a.s. leading to \eqref{CVWn}. We are now in position to prove \eqref{CVBn}. Let us recall that 
$$\calB_{n+1} = 2 \sum_{k=1}^n M_k^t P_k^{-1} \Delta M_{k+1} = 2 \sum_{k=1}^n M_k^t P_k^{-1} \Psi_k \xi_{k+1}.$$

\noindent Hence, $(\calB_n)$ is a square integrable martingale. In addition, we have
$$\Delta \calB_{n+1} = 2M_n^t P_n^{-1}\Delta M_{n+1}.$$
Consequently,
\begin{align*}
 \E[(\Delta \calB_{n+1})^2 | \calF_n] &= 4 \E[M_n^t P_n^{-1}\Delta M_{n+1} \Delta M_{n+1}^t P_n^{-1}M_n | \calF_n] \hspace{20pt} \text{a.s.} \\
	    &= 4 M_n^tP_n^{-1}\E[\Delta M_{n+1} \Delta M_{n+1}^t| \calF_n]P_n^{-1}M_n \hspace{20pt} \text{a.s.}\\
	    &= 4 M_n^tP_n^{-1} L_n P_n^{-1}M_n \hspace{20pt} \text{a.s.}
\end{align*}
However, we already saw from \eqref{defLk} that
\begin{equation*}
 L_n = \sum_{k\in\G_n} \begin{pmatrix}
                        P(X_k) & Q(X_k) \\
			Q(X_k) & R(X_k)
                       \end{pmatrix}
			      \otimes \begin{pmatrix}
			               X_k^2 & X_k \\ X_k & 1
			              \end{pmatrix}.
\end{equation*}
Moreover,
\begin{equation*}
\Delta P_n = P_n-P_{n-1} = \sum_{k\in\G_n} (1+X_k)^2 I_2 \otimes \begin{pmatrix}
						X_k^2 & X_k \\ X_k & 1
					      \end{pmatrix}.
\end{equation*}
For $\alpha=\max(\sigma_a^2,\sigma_c^2)+\max(\sigma_b^2,\sigma_d^2)+\max(|\rho_{ab}|,|\rho_{cd}|)$, denote
$$\Delta_n=\begin{pmatrix} \alpha(1+X_n^2) - P(X_n) & - Q(X_n)\\ - Q(X_n) & \alpha(1+X_n^2) - R(X_n) \end{pmatrix}$$
where $P(X_n)$, $Q(X_n)$ and $R(X_n)$ are given by \eqref{defPQR}. It is not hard to see that
$$\alpha\Delta P_n - L_n= \sum_{k\in \G_n} \Delta_k \otimes \begin{pmatrix} X_k^2 & X_k \\ X_k & 1 \end{pmatrix}.$$
We claim that $\Delta_n$ is a positive definite matrix. As a matter of fact, we deduce from the elementary inequalities
\begin{equation}\label{majpoly}
\begin{cases}
0<P(X)\leq \max(\sigma_a^2,\sigma_c^2)(1+X^2),\\
0<R(X)\leq \max(\sigma_b^2,\sigma_d^2)(1+X^2),\\
|Q(X)| \leq \max(|\rho_{ab}|,|\rho_{cd}|)(1+X^2),
\end{cases}
\end{equation}
that
\begin{align*}
tr(\Delta_n) &= 2\alpha(1+X_k^2) - P(X_n) -R(X_n) \\ &\geq (2\alpha - \max(\sigma_a^2,\sigma_c^2)-\max(\sigma_b^2,\sigma_d^2))(1+X_k^2) >0.
\end{align*}
In addition, we also have from \eqref{majpoly} that
\begin{align*}
\det(\Delta_n) &= (\alpha (1+X_n^2) - P(X_n))(\alpha (1+X_n^2) - R(X_n)) - Q^2(X_n),\\
	&=\alpha (1+X_n^2)\left(\alpha (1+X_n^2) - P(X_n) - R(X_n)\right) + P(X_n)R(X_n) - Q^2(X_n),\\
	&\geq P(X_n)R(X_n) + \alpha (1+X_n^2)^2 \max(|\rho_{ab}|,|\rho_{cd}|) - Q^2(X_n),\\
	&\geq P(X_n)R(X_n) + \max(|\rho_{ab}|,|\rho_{cd}|)^2 (1+X_n^2)^2 - Q^2(X_n) > 0.
\end{align*}
Consequently, $\Delta_n$ is a positive definite matrix which immediately implies that $L_n \leq \alpha \Delta P_n$. Moreover, we can use Lemma B.1 of \cite{BercuBDSAGP} to say that 
$$P_{n-1}^{-1} \Delta P_n P_{n-1}^{-1} \leq P_{n-1}^{-1} - P_n^{-1}.$$

\noindent Hence
\begin{align*}
 \E[(\Delta \calB_{n+1})^2 | \calF_n] &= 4 M_n^tP_n^{-1} L_n P_n^{-1}M_n \hspace{20pt} \text{a.s.}\\
			&\leq 4 \alpha M_n^tP_n^{-1} \Delta P_n P_n^{-1}M_n \hspace{20pt} \text{a.s.}\\
			&\leq 4 \alpha M_n^t(P_{n-1}^{-1} - P_n^{-1})M_n \hspace{20pt} \text{a.s.}\\
\end{align*}

\noindent leading to $\langle\calB\rangle_n \leq 4\alpha \calA_n$. Therefore it follows from the strong law of large numbers for martingales that $\calB_n = o(\calA_n)$. Hence, we deduce from decomposition \eqref{egaliteVn} that 
$$\calV_{n+1} + \calA_n = o(\calA_n) + \calO(n) \hspace{20pt} \text{ a.s.}$$

\noindent leading to $\calV_{n+1} = \calO(n)$ and $\calA_n = \calO(n)$ a.s.~which implies that $\calB_n= o(n)$ a.s. Finally we clearly obtain convergence \eqref{CVVnAn} from the main decomposition \eqref{egaliteVn} together with \eqref{CVWn} and \eqref{CVBn}, which completes the proof of Lemma \ref{lemCVVnAn}.
\end{proof}

\begin{Lem}\label{delta}
 Assume that \H{H1} to \H{H3} are satisfied. For all $\delta > 1/2$, we have
\begin{equation*}\label{majdelta}\|M_n\|^2 = o(|\T_n|n^\delta) \hspace{20pt} \text{ a.s.}\end{equation*}
\end{Lem}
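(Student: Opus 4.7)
The plan is to estimate the fourth moment of each component of $M_n$ and then conclude by a Borel--Cantelli argument. Let $M_n=(M_n^{(1)},M_n^{(2)},M_n^{(3)},M_n^{(4)})^t$, where for example $M_n^{(1)}=\sum_{k\in\T_{n-1}}X_kV_{2k}$.

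First I would compute the increasing process of each real-valued martingale $M^{(i)}$. The conditional independence property in \H{H3} kills the off-diagonal cross-terms in $\E\bigl[(\Delta M_{m}^{(i)})^2\mid\calF_{m-1}\bigr]$, and then relations~\eqref{sigma}--\eqref{rho} give
$$\langle M^{(i)}\rangle_n\;\le\; C\sum_{k\in\T_{n-1}}(1+X_k^4)\quad \text{a.s.}$$
for some constant $C>0$. Burkholder's inequality applied to the real martingale $M^{(i)}$ with exponent $4$ then yields
$$\E\bigl[(M_n^{(i)})^4\bigr]\;\le\; C\,\E\bigl[\langle M^{(i)}\rangle_n^2\bigr]\;\le\; C'\,\E\Bigl[\Bigl(\sum_{k\in\T_{n-1}}(1+X_k^4)\Bigr)^{\!2}\Bigr],$$
and Cauchy--Schwarz combined with the uniform bound $\sup_n\E[X_n^8]<\infty$ (obtained by iterating the RCBAR recursion exactly as in the proof of Lemma~\ref{CVsomme}, the sixteenth moment assumptions in \H{H1} ensuring that the recursion for $\E[X_n^{2p}]$ has a bounded fixed point up to $p=8$) gives
$$\E\bigl[(M_n^{(i)})^4\bigr]=\calO(|\T_n|^2).$$

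Next, Markov's inequality applied to $(M_n^{(i)})^4$ yields, for every $\delta>1/2$,
$$\Pro\bigl((M_n^{(i)})^2>|\T_n|\,n^{\delta}\bigr)\;\le\; \frac{\E[(M_n^{(i)})^4]}{|\T_n|^2 n^{2\delta}}=\calO\bigl(n^{-2\delta}\bigr),$$
which is summable because $2\delta>1$. By the Borel--Cantelli lemma, $(M_n^{(i)})^2\le |\T_n|\,n^{\delta}$ for all $n$ large enough, almost surely. To upgrade this $\calO$-bound to the $o$-bound demanded by the statement, I fix $\delta>1/2$ and choose any rational $\delta'\in(1/2,\delta)$; applying the preceding step with $\delta'$ in place of $\delta$ shows that almost surely $(M_n^{(i)})^2\le |\T_n|\,n^{\delta'}$ eventually, whence
$$\frac{(M_n^{(i)})^2}{|\T_n|\,n^\delta}\le n^{\delta'-\delta}\longrightarrow 0.$$
Summing over the four coordinates gives $\|M_n\|^2=o(|\T_n|\,n^\delta)$ a.s.

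The main obstacle is the moment computation $\E[\langle M^{(i)}\rangle_n^2]=\calO(|\T_n|^2)$: it forces a uniform control of $\E[X_k^8]$ in $k$, which is precisely the reason hypothesis \H{H1} demands finiteness of the sixteenth moments. Everything else is a routine application of Burkholder's inequality, Markov's inequality, and the standard $\calO$-to-$o$ trick via a countable intersection over rational $\delta'\in(1/2,\delta)$. One could alternatively avoid Burkholder by expanding $\E\bigl[(M_n^{(i)})^4\bigr]$ directly as a sum over quadruples of nodes in $\T_{n-1}$ and using \H{H3} to kill the odd cross-terms, but the Burkholder route is noticeably shorter.
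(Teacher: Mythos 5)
Your argument is correct in outline but takes a genuinely different route from the paper. The paper does not bound unconditional moments of $M_n$ at all: it writes each component as $T_n=\sum_{k=1}^n\sqrt{|\G_{k-1}|}\,f_k$ with normalized generation increments $f_k$, checks that $\E[f_{n+1}\mid\calF_n]=0$ and $\sup_n\E[f_{n+1}^4\mid\calF_n]<\infty$ a.s.\ (the latter via Lemma \ref{CVsomme}, which controls the empirical eighth moments of the $X_k$ almost surely), and then invokes Wei's lemma to get $T_n^2=o(|\T_{n-1}|n^\delta)$ directly. Your route replaces the citation to Wei by an unconditional fourth-moment bound $\E[(M_n^{(i)})^4]=\calO(|\T_n|^2)$, Markov plus Borel--Cantelli along the generation index, and the standard $\calO$-to-$o$ upgrade via an auxiliary exponent $\delta'\in(1/2,\delta)$; the last two steps are fine as written, and the price you pay is needing $\sup_n\E[X_n^8]<\infty$ in expectation rather than the a.s.\ Ces\`aro control the paper uses --- both are available under \H{H1}. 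Your approach is more elementary and self-contained; the paper's is shorter once Wei's lemma is granted.

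One step does need repair: the inequality $\E[(M_n^{(i)})^4]\le C\,\E[\langle M^{(i)}\rangle_n^2]$ with the \emph{predictable} bracket is false in general for fourth moments (a single heavy-tailed increment with finite variance is a counterexample). The correct tools are either Burkholder with the quadratic variation $[M^{(i)}]_n=\sum_m(\Delta M_m^{(i)})^2$, or the Rosenthal-type inequality $\E[|M_n^{(i)}|^4]\le C(\E[\langle M^{(i)}\rangle_n^2]+\sum_m\E[|\Delta M_m^{(i)}|^4])$. Either way the conclusion survives: conditioning on $\calF_{m-1}$ and using the conditional independence in \H{H3} gives $\E[|\Delta M_m^{(i)}|^4]=\calO(|\G_{m-1}|^2)$, so the extra term sums to $\calO(|\T_n|^2)$ as well. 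Alternatively, the direct expansion over quadruples of nodes that you mention at the end avoids the issue entirely and is the cleanest way to make your bound rigorous.
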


\begin{proof}
Let us recall that
$$M_n = \sum_{k\in\T_{n-1}} \begin{pmatrix} X_kV_{2k} \\ V_{2k} \\ X_kV_{2k+1} \\V_{2k+1}\end{pmatrix}.$$
Denote
$$\begin{array}{ccccc}T_n = \displaystyle \sum_{k\in\T_{n-1}} X_kV_{2k}  & & \text{ and } & & \displaystyle U_n = \sum_{i\in\T_{n-1}} V_{2k}.\end{array}$$
On the one hand, $T_n$ can be rewritten as
$$\begin{array}{ccccc}\displaystyle{ T_n = \sum_{k=1}^n \sqrt{|\G_{k-1}|} f_k } & & \text{ where } & & \displaystyle f_n = \frac1{\sqrt{|\G_{n-1}|}}\sum_{k\in\G_{n-1}} X_kV_{2k}.\end{array}$$
We already saw in Section 3 that, for all $n\geq0$ and for all $k\in\G_n$,
$$\begin{array}{ccccccc} \E[V_{2k}|\calF_n] = 0 & & \text{ and } & & \E[V_{2k}^2|\calF_n] = \sigma_a^2X_k^2+\sigma_c^2 = P(X_k) & & \text{a.s.} \end{array}$$
In addition, for all $k\in\G_n$,
$$\E[V_{2k}^4|\calF_n] = \mu_a^4X_k^4 + 6\sigma_a^2\sigma_c^2 X_k^2 + \mu_c^4 \hspace{20pt} \text{a.s.}$$
which implies that
\begin{equation}\label{majV2k4} \E[V_{2k}^4|\calF_n] \leq \mu_{ac}^4 (1+X_k^2)^2 \hspace{20pt} \text{a.s.}\end{equation}
where $\mu_{ac}^4 = \max(\mu_a^4,3\sigma_a^2\sigma_c^2,\mu_c^4)$. Consequently, $\E[f_{n+1}|\calF_n]=0$ a.s.~and we deduce from \eqref{majV2k4} together with the Cauchy-Schwarz inequality that
\begin{align*}
\E[f_{n+1}^4|\calF_n] &= \frac1{|\G_n|} \E\left[\left. \(\sum_{k\in\G_n} X_kV_{2k} \)^4 \right|\calF_n\right],\\
	&= \frac1{|\G_n|^2} \sum_{k\in\G_n}X_k^4\E[V_{2k}^4|\calF_n]\\
		&\hspace{80pt}+ \frac3{|\G_n|^2} \sum_{k\in\G_n}\sum_{\substack{l\in\G_n \\ l\neq k}}X_k^2X_l^2\E[V_{2k}^2|\calF_n] \E[V_{2l}^2|\calF_n],
\end{align*}
leading to
$$\E[f_{n+1}^4|\calF_n]\leq \frac{\mu_{ac}^4}{|\G_n|^2} \sum_{k\in\G_n} X_k^4(1+X_k^2)^2 + 3\max(\sigma_a^2,\sigma_c^2)^2 \(\frac{1}{|\G_n|}\sum_{k\in\G_n}X_k^2(1+X_k^2)\)^2.$$
Therefore, we infer from Lemma \ref{CVsomme} that
$$\sup_{n\geq0}\E[f_{n+1}^4|\calF_n]<\infty \hspace{20pt} \text{a.s.}$$
Hence, we obtain from Wei's lemma given in \cite{Wei} page 1672 that for all $\delta > 1/2$,
$$T_n^2 = o(|\T_{n-1}|n^\delta) \hspace{20pt} \text{a.s.}$$
On the other hand, $U_n$ can be rewritten as
$$\begin{array}{ccccc}\displaystyle{ U_n = \sum_{k=1}^n \sqrt{|\G_{k-1}|} g_k } & & \text{ where } & & \displaystyle g_n = \frac1{\sqrt{|\G_{n-1}|}}\sum_{k\in\G_{n-1}} V_{2k}.\end{array}$$
Via the same calculation as before, $\E[g_{n+1}|\calF_n] = 0$ a.s.~and
$$\E[g_{n+1}^4|\calF_n] \leq \frac{\mu_{bd}^4}{|\G_n|^2} \sum_{k\in\G_n}  (1+X_k^2)^2 + 3\max(\sigma_b^2,\sigma_d^2)^2 \(\frac{1}{|\G_n|}\sum_{k\in\G_n}(1+X_k^2)\)^2$$
Hence, we deduce once again from Lemma \ref{CVsomme} and Wei's Lemma that for all $\delta > 1/2$,
$$U_n^2 = o(|\T_{n-1}|n^\delta) \hspace{20pt} \text{a.s.}$$
In the same way, we obtain the same result for the two last components of $M_n$, which completes the proof of Lemma \ref{delta}.
\end{proof}

\subsection{Proof of Theorem \ref{CVpstheta}.} We recall that $\calV_n = (\wh\theta_n -\theta) \Sigma_{n-1} P_{n-1}^{-1} \Sigma_{n-1} (\wh \theta_n - \theta)$ which implies that
$$\|\wh \theta_n - \theta \|^2 \leq \frac{\calV_n}{\lambda_{min}(\Sigma_{n-1}P_{n-1}^{-1}\Sigma_{n-1})}.$$
On the one hand, it follows from \eqref{CVVnAn} that $\calV_n=\calO(n)$ a.s. On the other hand, we deduce from Lemma \ref{CVsomme} that
\begin{equation}\lim_{n\to\infty} \frac{\Sigma_n}{|\T_n|} = I_2\otimes C \as \label{CVSigman}\end{equation}
where $C$ is the positive definite matrix given by \eqref{defCD}. Therefore, we obtain from \eqref{CVPn} and \eqref{egaliteVn} that
$$\lim_{n\to\infty} \frac{\lambda_{min}(\Sigma_{n-1}P_{n-1}^{-1}\Sigma_{n-1})}{|\T_{n-1}|} = \lambda_{min}(C(C+D)^{-1}C) >0 \hspace{20pt} \text{a.s.}$$
Consequently, we find that
$$\|\wh \theta_n - \theta \|^2 = \calO\left(\frac{n}{|\T_{n-1}|}\right) \hspace{20pt} \text{a.s.}$$
We are now in position to prove the quadratic strong law \eqref{quadratic1}. First of, all a direct application of Lemma \ref{delta} ensures that $\calV_n = o(n^\delta)$ a.s.~for all $\delta > 1/2$. Hence, we obtain from \eqref{CVVnAn} that
\begin{equation}\lim_{n\to\infty} \frac{\calA_n}n = \frac12 tr((I_2 \otimes (C+D))^{-1/2} L (I_2 \otimes (C+D))^{-1/2}) \hspace{20pt} \text{ a.s.}\label{CVAn/n}\end{equation}

\noindent Let us rewrite $\calA_n$ as 
$$\calA_n = \sum_{k=1}^n M_k^t\left(P_{k-1}^{-1} - P_k^{-1}\right) M_k = \sum_{k=1}^n M_k^tP_{k-1}^{-1/2} A_k P_{k-1}^{-1/2} M_k,$$

\noindent where $A_k = I_4 - P_{k-1}^{1/2}P_{k}^{-1}P_{k-1}^{1/2}$. We already saw from \eqref{CVPn} that 
$$\lim_{n\to\infty} \frac{P_n}{|\T_n|} = I_2\otimes (C+D) \hspace{20pt} \text{a.s.}$$
which ensures that
$$\displaystyle \lim_{n\to\infty} A_n = \frac12 I_4 \hspace{20pt} \text{a.s.}$$
In addition, we deduce from \eqref{CVVnAn} that $\calA_n=\calO(n)$ a.s.~which implies that
\begin{equation}\frac{\calA_n}n = \left(\frac1{2n}\sum_{k=1}^nM_k^tP_{k-1}^{-1}M_k\right) + o(1) \hspace{20pt} \text{ a.s.}\label{eqAn/n}\end{equation}
Moreover, we also have
\begin{align}
 \frac1n\sum_{k=1}^nM_k^tP_{k-1}^{-1}M_k &= \frac1n \sum_{k=1}^n (\wh \theta_k - \theta)^tP_{k-1}(\wh \theta_k - \theta), \nonumber\\
	  &= \frac1n \sum_{k=1}^n |\T_{k-1}| (\wh \theta_k - \theta)^t\frac{P_{k-1}}{|\T_{k-1}|}(\wh \theta_k - \theta), \nonumber\\
	  &= \frac1n \sum_{k=1}^n |\T_{k-1}| (\wh \theta_k - \theta)^t(I_2 \otimes (C+D))(\wh \theta_k - \theta) + o(1) \hspace{20pt} \text{ a.s.} \label{elementQSL}
\end{align}

\noindent Therefore, \eqref{CVAn/n} together with \eqref{eqAn/n} and \eqref{elementQSL} lead to \eqref{quadratic1}.


\subsection{Proof of Theorem \ref{CVpsvar}}


We only prove \eqref{vitesseeta} inasmuch as the proof of \eqref{vitesseetad} follows exactly the same lines. Relation \eqref{esteta} immediately leads to
\begin{align}
Q_{n-1}(\wh \eta_n - \eta_n) &= \sum_{l=0}^{n-1} \sum_{k\in\G_l} (\wh V_{2k}^2 - V_{2k}^2) \psi_k,\nonumber\\
	&= \sum_{l=0}^{n-1} \sum_{k\in\G_l} \left((\wh V_{2k} - V_{2k})^2 +2(\wh V_{2k} - V_{2k})V_{2k}\right)\psi_k.\label{diffeta}
\end{align}
Moreover, we clearly have from Section \ref{LSE} that, for all $n\geq0$ and for all $k\in\G_n$
$$\wh V_{2k} - V_{2k} = -\begin{pmatrix} \wh a_n -a \\ \wh c_n -c \end{pmatrix}^t \Phi_k,$$
which implies that
$$(\wh V_{2k} - V_{2k})^2 \leq \left((\wh a_n - a)^2 + (\wh c_n - c)^2\right)\|\Phi_k\|^2 = \left((\wh a_n - a)^2 + (\wh c_n - c)^2\right)(1+X_k^2).$$
In addition, since $\|\psi_k\|^2 = 1+X_k^4 \leq (1+X_k^2)^2$, we have
\begin{align*}
\left\|\sum_{l=0}^{n-1} \sum_{k\in\G_l} (\wh V_{2k} - V_{2k})^2 \psi_k\right\| &\leq \sum_{l=0}^{n-1} \left((\wh a_l - a)^2 + (\wh c_l - c)^2\right) \sum_{k\in\G_l} (1+X_k^2)^2.
\end{align*}
However, it follows from Lemma \ref{CVsomme} that
$$\sum_{k\in\G_l} (1+X_k^2)^2 = \calO(|\G_l|) \as$$
and since $\Lambda$ is positive definite, \eqref{quadratic1} leads to 
$$\sum_{l=0}^{n-1} \left((\wh a_l - a)^2 + (\wh c_l - c)^2\right) |\G_l| = \calO(n) \as$$
Hence, we find that 
\begin{equation}
\left\|\sum_{l=0}^{n-1} \sum_{k\in\G_l} (\wh V_{2k} - V_{2k})^2 \psi_k\right\|  = \calO(n) \as \label{premierterme}
\end{equation}
Let us now tackle
$$P_n = \sum_{l=0}^{n-1} \sum_{k\in\G_l} (\wh V_{2k} - V_{2k})V_{2k}\psi_k.$$
It is clear that
\begin{align*}
\Delta P_{n+1} &= P_{n+1} - P_n = \sum_{k\in\G_n} (\wh V_{2k} - V_{2k})V_{2k}\psi_k,\\
&= -\sum_{k\in\G_n} V_{2k}\psi_k\Phi_k^t\begin{pmatrix} \wh a_n -a \\ \wh c_n -c \end{pmatrix}.
\end{align*}
Since, for al $k\in\G_n$, $\E[V_{2k}|\calF_n]=0$ a.s. and $\E[V_{2k}^2|\calF_n] = P(X_k)$ a.s., we have 
\begin{align*}
\E[\Delta P_{n+1} \Delta P_{n+1}^t | \calF_n] = \sum_{k\in\G_n}P(X_k) \psi_k \Phi_k^t \begin{pmatrix} \wh a_n -a \\ \wh c_n -c \end{pmatrix}\begin{pmatrix} \wh a_n -a \\ \wh c_n -c \end{pmatrix}^t \Phi_k \psi_k^t \as
\end{align*}
which allows to say that $(P_n)$ is a square integrable martingale with increasing process $\langle P \rangle_n$ given by
\begin{align*}
\langle P \rangle_n &= \sum_{l=0}^{n-1} \E[\Delta P_{l+1} \Delta P_{l+1}^t | \calF_n],\\
	&= \sum_{l=0}^{n-1} \sum_{k\in\G_l}P(X_k) \psi_k \Phi_k^t \begin{pmatrix} \wh a_l -a \\ \wh c_l -c \end{pmatrix}\begin{pmatrix} \wh a_l -a \\ \wh c_l -c \end{pmatrix}^t \Phi_k \psi_k^t \as
\end{align*}
Consequently, if $\alpha=\max(\sigma_a^2,\sigma_c^2)$, we obtain that
\begin{align*}
\|\langle P \rangle_n\| &\leq \alpha \sum_{l=0}^{n-1} \left((\wh a_l - a)^2 + (\wh c_l - c)^2\right) \sum_{k\in\G_l} (1+X_k^2) \|\psi_k\|^2 \|\Phi_k\|^2 \as\\
	&\leq \alpha \sum_{l=0}^{n-1} \left((\wh a_l - a)^2 + (\wh c_l - c)^2\right) \sum_{k\in\G_l} (1+X_k^2)^4 \as
\end{align*}
leading, as previously via Lemma \ref{CVsomme} and \eqref{quadratic1}, to
$$\|\langle P \rangle_n\| = \calO(n) \as$$
The strong law of large numbers for martingale given e.g.~in Theorem 1.3.15 of \cite{Duflo} implies that
\begin{equation}
P_n = o(n) \as \label{deuxiemeterme}
\end{equation}
Then, we deduce from \eqref{diffeta}, \eqref{premierterme} and \eqref{deuxiemeterme} that
\begin{equation}\label{Oden}\|Q_{n-1}(\wh \eta_n - \eta_n)\| = \calO(n) \as\end{equation}
Moreover, we obtain through Lemma \ref{CVsomme} that
\begin{equation}
\lim_{n\to\infty} \frac1{|\T_n|} Q_n = \begin{pmatrix} s_4 & s_2 \\ s_2 & 1 \end{pmatrix} \as \label{limQ}
\end{equation}
and we can prove, through tedious calculations, that this limiting matrix is positive definite. Therefore, \eqref{Oden} immediately implies \eqref{vitesseeta}. We shall now proceed to the proof of \eqref{vitesserho}. Denote
$$R_n = \sum_{k\in\T_{n-1}} (\wh W_k - W_k)^t J W_k \psi_k,$$
where
$$\wh W_k = \begin{pmatrix} \wh V_{2k} \\ \wh V_{2k+1} \end{pmatrix} \hspace{20pt} \text{and} \hspace{20pt} J = \begin{pmatrix} 0 & 1 \\ 1 & 0 \end{pmatrix}.$$
It follows from \eqref{estnu} that
$$Q_{n}(\wh \nu_n -\nu_n) = \sum_{k\in\T_{n-1}}(\wh V_{2k} - V_{2k})(\wh V_{2k+1} - V_{2k+1}) \psi_k + R_n.$$
Furthermore, one can observe that $(R_n)$ is a square integrable martingale with increasing process
\begin{align*}
\langle R \rangle _n &= \sum_{l=0}^{n-1} \sum_{k\in\G_l} \E[(\wh W_k - W_k)^t J W_k W_k^t J (\wh W_k - W_k)\psi_k\psi_k^t|\calF_l]\as\\
	&= \sum_{l=0}^{n-1} \sum_{k\in\G_l} (\wh W_k - W_k)^t J \E[W_k W_k^t
|\calF_l] J (\wh W_k - W_k)\psi_k\psi_k^t\as\\
	&= \sum_{l=0}^{n-1} \sum_{k\in\G_l} (\wh W_k - W_k)^t J \begin{pmatrix} P(X_k) & Q(X_k) \\ Q(X_k) & R(X_k) \end{pmatrix} J (\wh W_k - W_k)\psi_k\psi_k^t\as\\
&= \sum_{l=0}^{n-1} \sum_{k\in\G_l} (\wh W_k - W_k)^t \begin{pmatrix} R(X_k) & Q(X_k) \\ Q(X_k) & P(X_k) \end{pmatrix} (\wh W_k - W_k)\psi_k\psi_k^t\as\\
\end{align*}
Then, as previously, Lemma \ref{CVsomme} and \eqref{quadratic1} lead to $\|\langle R \rangle_n\| = \calO(n)$ a.s. which allows us to say that $R_n = o(n)$ a.s. Furthermore
\begin{align*}
\left\| \sum_{k\in\T_{n-1}}(\wh V_{2k} - V_{2k})(\wh V_{2k+1} - V_{2k+1}) \psi_k \right\|&\\
&\hspace{-30pt} \leq\frac12 \sum_{k\in\T_{n-1}} \left((\wh V_{2k} - V_{2k})^2 + (\wh V_{2k+1} - V_{2k+1})^2\right) \|\psi_k\|,\\
&\hspace{-30pt} \leq \frac12 \sum_{l=0}^{n-1} \|\wh \theta_n -\theta\|^2 \sum_{k\in\G_l} \|\Phi_k\|^2 \|\psi_k\|,
\end{align*}
which implies, thanks to Lemma \ref{CVsomme} and \eqref{quadratic1}, that
$$\left\| \sum_{k\in\T_{n-1}}(\wh V_{2k} - V_{2k})(\wh V_{2k+1} - V_{2k+1}) \psi_k \right\| = \calO(n)\as$$
Finally, we infer from \eqref{limQ} that
$$\|\wh \nu_n - \nu_n\| = \calO\left(\frac{n}{|\T_{n-1}|}\right)\as$$
It remains to prove the a.s. convergence of $\eta_n$, $\zeta_n$ and $\nu_n$ to $\eta$, $\zeta$ and $\nu$, respectively which would immediately imply the a.s. convergence of our estimates through \eqref{vitesseeta}, \eqref{vitesseetad} and \eqref{vitesserho}. Denote
\begin{equation}
N_n = Q_{n-1}(\eta_n-\eta)=\sum_{k\in\T_{n-1}} v_{2k} \psi_k \label{defN}
\end{equation}
where $v_{2n}=V_{2n}^2-\eta^t\psi_n$. One can observe that $(N_n)$ is a square integrable martingale with increasing process $\langle N \rangle_n$ given by
\begin{align*}
\langle N \rangle_n &= \sum_{l=0}^{n-1} \sum_{k\in\G_l} \E[v_{2k}^2|\calF_l] \psi_k \psi_k^t \as\\
\end{align*}
Hence, if $\gamma = \max(\mu_a^4-\sigma_a^4,2\sigma_a^2\sigma_c^2,\mu_c^4-\sigma_c^4)$, we obtain that
\begin{align*}
\|\langle N \rangle_n\| &\leq \left\|\sum_{l=0}^{n-1} \sum_{k\in\G_l} \gamma (1+X_k^2)^2 \psi_k \psi_k^t\right\|\as\\
	&\leq \gamma \sum_{k\in\T_{n-1}} (1+X_k^2)^2 \|\psi_k\|^2 = \gamma \sum_{k\in\T_{n-1}} (1+X_k^2)^4\as
\end{align*}
which leads, via Lemma \ref{CVsomme}, to
$$\|\langle N \rangle_n\| = \calO(|\T_{n-1}|)\as$$
Consequently,
$$\|N_n\|^2 = \calO(n|\T_{n-1}|) \as$$
Then, we deduce from \eqref{limQ} and \eqref{defN} that $\eta_n$ converges a.s.~to $\eta$ with the a.s.~rate of convergence given in Remark \ref{remrate}. The proof concerning the a.s. convergence of $\zeta_n$ to $\zeta$ and the second rate of convergence in Remark \ref{remrate} is exactly the same. Hereafter, denote
\begin{equation}
H_n = Q_{n-1}(\nu_n - \nu) = \sum_{k\in\T_{n-1}} w_{2k} \psi_k \label{defH}
\end{equation}
where $w_{2n} = V_{2n}V_{2n+1} - \nu^t\psi_n$. Once again, the sequence $(H_n)$ is a square integrable martingale with increasing process
\begin{align*}
\langle H \rangle_n = \sum_{l=0}^{n-1} \sum_{k\in\G_l} \E[w_{2k}^2|\calF_l] \psi_k \psi_k^t \as 
\end{align*}
Moreover, if $\alpha= \max(\nu_{ab}^2,\nu_{cd}^2,(\sigma_a^2+\sigma_c^2)(\sigma_b^2+\sigma_d^2))$, we find that
\begin{align*}
\|\langle H \rangle_n\| &\leq \left\|\sum_{l=0}^{n-1} \sum_{k\in\G_l} \alpha (1+X_k^2)^2 \psi_k \psi_k^t\right\|\as\\
	&\leq \alpha \sum_{k\in\T_{n-1}} (1+X_k^2)^2 \|\psi_k\|^2 = \alpha \sum_{k\in\T_{n-1}} (1+X_k^2)^4\as
\end{align*}
which allows us to say, as previously, that
$$\|H_n\|^2 = \calO(n|\T_{n-1}|) \hspace{20pt} \text{and} \hspace{20pt} \|\nu_n - \nu\|^2 = \calO\left(\frac{n}{|\T_{n-1}|}\right)\as$$
It clearly proves the a.s. convergence of $\nu_n$ to $\nu$ with the last a.s.~rate of convergence given in Remark \ref{remrate}, which completes the proof of Theorem \ref{CVpsvar}.


\section{Proofs of the asymptotic normalities}


The key point of the proof of the asymptotic normality of our estimators is the central limit theorem for triangular array of vector martingale given e.g. in Theorem 2.1.9 of \cite{Duflo}. With this aim in mind, we will change the filtration considering, instead of the generation wise filtration $(\calF_n)$, the sister-pair wise filtration $(\calG_n)$ given by
$$\calG_n = \sigma\left\{X_1,(X_{2k},X_{2k+1}),1\leq k\leq n\right\}.$$
\subsection{Proof of convergence \eqref{TCLtheta}}
We will consider the triangular array of vector martingale $(M_k^{(n)})$ defined as
\begin{equation}\label{defMnk}
M_k^{(n)} = \frac1{\sqrt{|\T_n|}} \sum_{l=1}^k D_l,
\end{equation}
where 
$$D_l = \begin{pmatrix} X_lV_{2l} \\ V_{2l} \\ X_lV_{2l+1} \\V_{2l+1} \end{pmatrix}.$$
It is obvious that $\left(M^{(n)}\right)$ is a square integrable vector valued martingale with respect to the filtration $(\calG_k)$. Moreover, we can observe that
\begin{equation}\label{lienM}
M_{t_n}^{(n)} = \frac1{\sqrt{|\T_n|}} \sum_{l=1}^{t_n} D_l = \frac1{\sqrt{|\T_n|}} M_{n+1}
\end{equation}
where $t_n=|\T_n|=2^{n+1}-1$. In addition, the increasing process of this square integrable martingale is given by
\begin{align*}
\langle M^{(n)}\rangle_k &= \frac1{|\T_n|} \sum_{l=1}^{k} \E[D_l D_l^t|\calG_{l-1}],\\
	&= \frac1{|\T_n|} \sum_{l=1}^k \begin{pmatrix}P(X_l) & Q(X_l) \\ Q(X_l) & R(X_l) \end{pmatrix} \otimes \begin{pmatrix}X_l^2 & X_l \\ X_l & 1\end{pmatrix} \as
\end{align*}
Then, \eqref{limcrochet} leads to
$$\lim_{n\to\infty} \langle M^{(n)}\rangle _{t_n} = L \as$$
We will now establish Lindeberg's condition thanks to Lyapunov's condition. Let
$$\phi_n=\sum_{k=1}^{t_n} \E\left[\left.\|M_k^{(n)} - M_{k-1}^{(n)}\|^4\right|\calG_{k-1}\right].$$
It follows from \eqref{defMnk} that 
\begin{align*}
\phi_n &= \frac1{|\T_n|^2}\sum_{k=1}^{t_n} \E\left[\left.(1+X_k^2)^2(V_{2k}^2+V_{2k+1}^2)^2\right|\calG_{k-1}\right],\\
	&\leq \frac2{|\T_n|^2}\sum_{k=1}^{t_n} \E\left[\left.(1+X_k^2)^2(V_{2k}^4+V_{2k+1}^4)\right|\calG_{k-1}\right].
\end{align*}
Since we already saw in Section \ref{preuveCVpstheta} that
$$\E\left[V_{2k}^4|\calF_n\right] \leq \mu_{ac}^4(1+X_k^2)^2  \hspace{20pt} \text{ and } \hspace{20pt} \E[V_{2k+1}^4|\calF_n] \leq \mu_{bc}^4(1+X_k^2)^2 \as$$
where $\mu_{ac}^4=\max(\mu_a^4,3\sigma_a^2\sigma_c^2,\mu_c^4)$ and $\mu_{bd}^4=\max(\mu_b^4,3\sigma_b^2\sigma_d^2,\mu_d^4)$, we have that
$$\phi_n \leq \frac{2(\mu_{ac}^4+\mu_{bd}^4)}{|\T_n|^2} \sum_{k=1}^{t_n}(1+X_k^2)^4\as$$
leading, via Lemma \ref{CVsomme}, to
$$\lim_{n\to\infty} \phi_n = 0\as$$
Consequently, Lyapunov's condition is satisfied and Theorem 2.1.9 of \cite{Duflo} together with \eqref{lienM} imply that
$$\frac{1}{\sqrt{|\T_{n-1}|}} M_n \liml \calN(0,L).$$
Moreover, we easily obtain from Lemma \ref{CVsomme} that
\begin{equation}\lim_{n\to\infty} \frac{\Sigma_n}{|\T_n|} = I_2\otimes C= \Gamma \as\label{limSigma}\end{equation}
where $C$ is the positive definite matrix given by \eqref{defCD}. Finally, we deduce from \eqref{diffthetaM} together with \eqref{limSigma} and Slutsky's lemma that
$$\sqrt{|\T_{n-1}|}(\wh \theta_n - \theta) \liml \calN(0,\Gamma^{-1} L \Gamma^{-1}).$$
\subsection{Proof of convergence \eqref{TCLeta}}
We will now consider the triangular array of vector martingale $(N_k^{(n)})$ defined as
\begin{equation*}
N_k^{(n)} = \frac1{\sqrt{|\T_n|}} \sum_{l=1}^k v_{2l}\psi_l.
\end{equation*}
It is obvious from \eqref{defN} that
\begin{equation}\label{lienN}
N_{t_n}^{(n)} = \frac1{\sqrt{|\T_n|}}Q_n(\eta_{n+1}-\eta)=\frac1{\sqrt{|\T_n|}}N_{n+1}.
\end{equation}
Moreover, we also have
$$\E[v_{2n}^2|\calG_{n-1}] = (\mu_a^4-\sigma_a^4)X_n^4+4\sigma_a^2\sigma_c^2X_n^2+(\mu_c^4-\sigma_c^4).$$
Hence, the increasing process associated to the square integrable martingale $\left(N^{(n)}\right)$ is given by
\begin{align*}
\langle N^{(n)}\rangle_k &= \frac1{|\T_n|} \sum_{l=1}^k \E\left[v_{2l}^2\psi_l\psi_l^t |\calG_{l-1}\right],\\
&=\frac1{|\T_n|}\sum_{l=1}^k\left((\mu_a^4-\sigma_a^4)X_l^4+4\sigma_a^2\sigma_c^2X_l^2+(\mu_c^4-\sigma_c^4)\right)\psi_l\psi_l^t,
\end{align*}
and Lemma \ref{CVsomme} allows us to say that
$$\lim_{n\to\infty} \langle N^{(n)}\rangle_{t_n} = M_{ac} \as$$
As previously, we now need to check Lyapunov's condition. For $\alpha>4$ such that \H{H5} is satisfied, denote
$$\phi_n = \sum_{k=1}^{t_n} \E\left[\left.\|N_k^{(n)} - N_{k-1}^{(n)}\|^{\alpha/2}\right|\calG_{k-1} \right].$$
We clearly have
$$\|N_k^{(n)} - N_{k-1}^{(n)}\|^2 = \frac1{|\T_n|}v_{2k}^2\|\psi_k\|^2 \leq  \frac1{|\T_n|}v_{2k}^2 (1+X_k^2)^2,$$
leading to 
\begin{equation}\|N_k^{(n)} - N_{k-1}^{(n)}\|^{\alpha/2} \leq \frac1{|\T_n|^{\alpha/4}}|v_{2k}|^{\alpha/2} (1+X_k^2)^{\alpha/2}.\label{majN}\end{equation}
Moreover, it exists some constant $\beta>0$ such that
\begin{align}
|v_{2k}|^{\alpha/2} &\leq (V_{2k}^2+\sigma_a^2X_k^2+\sigma_c^2)^{\alpha/2},\nonumber\\
	&\leq \beta(|V_{2k}|^{\alpha}+|X_k|^{\alpha}+1).\label{majV6}
\end{align}
In addition, it exists some constant $\gamma>0$ such that
\begin{align}
|V_{2k}|^\alpha&\leq\(|a_k-a||X_k|+|\veps_{2k}-c|\)^{\alpha},\nonumber\\
&\leq\gamma\(|a_k-a|^{\alpha}|X_k|^{\alpha}+|\veps_{2k}-c|^{\alpha}\).\label{majV2k5}
\end{align}
Denote
$$Y=\max\(\sup_{n\geq0}\sup_{k\in\G_n} \E[|a_k-a|^\alpha|\calF_n],\sup_{n\geq0}\sup_{k\in\G_n} \E[|\veps_{2k}-c|^\alpha|\calF_n]\).$$
It clearly follows from \eqref{majV2k5} that
$$\E[|V_{2k}|^\alpha|\calG_{k-1}] \leq \gamma Y(|X_k|^\alpha+1)\as$$
Consequently, we deduce from \eqref{majN} and \eqref{majV6} that it exists some constants $\delta>0$ and $\zeta>0$ such that
\begin{align*}
\phi_n&\leq\sum_{k=1}^{t_n} \frac{\beta(1+\gamma Y)}{|\T_n|^{\alpha/4}}(1+|X_k|^\alpha)(1+X_k^2)^{\alpha/2}\as\\
	&\leq\sum_{k=1}^{t_n} \frac{\delta(1+Y)}{|\T_n|^{\alpha/4}}(1+X_k^2)^\alpha\as\\
	&\leq\frac{\zeta(1+Y)}{|\T_n|^{\xi}}\frac{1}{|\T_n|}\sum_{k=1}^{t_n} (1+X_k^{2\alpha})\as
\end{align*}
where $\xi=\alpha/4-1>0$. Moreover, we can obviously suppose that $\alpha\leq5$ and we can prove via the same lines as in Section \ref{preuvepremlem} that, as soon as $\E[a_n^{10}]<1$, $\E[b_n^{10}]<1$ and
$$\sup_{n\geq1}\E[\veps_{2n}^{10}]<\infty \hspace{20pt} \text{and} \hspace{20pt} \sup_{n\geq1}\E[\veps_{2n+1}^{10}]<\infty$$
it exists some positive constant $\mu$ such that, for all $n\geq0$, $\E[X_n^{10}]<\mu$. Therefore, the Borel-Cantelli lemma clearly ensures that
$$\lim_{n\to\infty} \frac1{|\T_n|^\xi}\frac1{|\T_n}\sum_{k=1}^{t_n}(1+X_k^{10})=0\as$$
which implies that
$$\lim_{n\to\infty} \phi_n = 0 \as$$
Thus, Lyapunov's condition is satisfied and we infer from Theorem 2.1.9 of \cite{Duflo} and \eqref{lienN} that
\begin{equation}\label{limlN}
\frac1{\sqrt{|\T_{n-1}|}}N_n \liml \calN(0,M_{ac}).
\end{equation}
Finally, \eqref{limQ} together with \eqref{limlN} and Slutsky's lemma allow us to say that
$$\sqrt{|\T_{n-1}|}(\eta_n - \eta) \liml \calN(0,Q^{-1}M_{ac}Q^{-1})$$
implying, through \eqref{vitesseeta}, that
$$\sqrt{|\T_{n-1}|}(\wh \eta_n - \eta) \liml \calN(0,Q^{-1}M_{ac}Q^{-1}).$$
The proof of \eqref{TCLetad} follows exactly the same lines.\\
\subsection{Proof of convergence \eqref{TCLrho}}
The last step is to prove the asymptotic normality given by \eqref{TCLrho}. We will once again consider a triangular array of vector martingales $(H^{(n)}_k)$ defined as
$$H_k^{(n)} = \frac1{\sqrt{|\T_n|}} \sum_{l=1}^{k} w_{2l} \psi_l.$$
We clearly have
$$H_{t_n}^{(n)} = \frac1{\sqrt{|\T_n|}} \sum_{l=1}^{t_n} w_{2l} \psi_l = \frac1{\sqrt{|\T_n|}} H_n.$$
Consequently, $\left(H^{(n)}\right)$ is a square integrable martingale with increasing process given by
$$\langle H^{(n)}\rangle_k = \frac1{|\T_n|}\sum_{l=1}^k\E[w_{2l}^2|\calG_{l-1}]\psi_l\psi_l^t.$$
Moreover, we can easily obtain that
$$\E[w_{2n}^2|\calG_{n-1}] = (\nu_{ab}^2-\rho_{ab}^2)X_n^4+(\sigma_a^2\sigma_d^2+\sigma_b^2\sigma_c^2+2\rho_{ab}\rho_{cd})X_n^2+(\nu_{cd}^2-\rho_{cd}^2) \as$$
Therefore, it follows from Lemma \ref{CVsomme} that
$$\lim_{n\to\infty} \langle H^{(n)}\rangle_{t_n} = H \as$$
Let us now verify that Lyapunov's condition is satisfied. For $\alpha>4$ such that \H{H5} is verified, denote
$$\phi_n = \sum_{i=1}^{t_n}\E\left[\left.\|H^{(n)}_k-H_{k-1}^{(n)}\|^{\alpha/2}\right|\calG_{k-1}\right].$$
As previously, we obtain that
$$\|H^{(n)}_k-H_{k-1}^{(n)}\|^{\alpha/2} \leq \frac1{|\T_{n}|^{\alpha/4}} |w_{2k}|^{\alpha/2}(1+X_k^2)^{\alpha/2},$$
and we can see that
\begin{align*}
|w_{2k}|&\leq|V_{2k}V_{2k+1}|+|\rho_{ab}|X_k^2+|\rho_{cd}|,\\
&\leq \frac12V_{2k}^2+\frac12V_{2k+1}^2+|\rho_{ab}|X_k^2+|\rho_{cd}|.
\end{align*}
We deduce from the previous calculations that it exists some constant $\xi>0$ and some a.s.~finite random variable $Y$ such that
$$\E[|w_{2k}|^{\alpha/2}|\calG_{k-1}]\leq\xi(1+Y)(1+|X_k|^\alpha)\as$$
It leads, for some constant $\zeta>0$, to
$$\E[\|H^{(n)}_k-H_{k-1}^{(n)}\|^{\alpha/2}|\calG_{k-1}]\leq \frac{\zeta(1+Y)}{|\T_n|^{\alpha/4}} (1+X_k^{2\alpha})\as$$
Therefore, as before, we find that
$$\lim_{n\to\infty}\phi_n=0\as$$
Finally, we obtain that
$$\frac1{\sqrt{|\T_{n-1}|}}H_n\liml\calN(0,H),$$
alternatively
$$\sqrt{|\T_{n-1}}(\nu_n-\nu)\liml\calN(0,Q^{-1}HQ^{-1}),$$
which, via \eqref{vitesserho} allows us to conclude that
$$\sqrt{|\T_{n-1}}(\wh\nu_n-\nu)\liml\calN(0,Q^{-1}HQ^{-1}).$$

\nocite{*}
\bibliographystyle{acm}
\bibliography{RCBARnotweighted}

\end{document}